\newtheorem{thm}{Theorem}[section]
\newtheorem{prop}[thm]{Proposition}
\newtheorem{rem}[thm]{Remark}
\definecolor{myblue}{RGB}{240,240,255}
\definecolor{myshadow}{RGB}{200,200,200}
\definecolor{mygray}{RGB}{230,230,230}
\tikzstyle{complex}=[shape=rectangle, draw=black, fill=myblue, drop shadow=myshadow]
\tikzset{every loop/.style={min distance=10mm,in=323,out=217,looseness=6}}
\let\epsilon\varepsilon
\author[1]{Mihály A. Vághy}
\author[1,2]{G\'abor Szederk\'enyi} 
\affil[1]{\small P\'azm\'any P\'eter Catholic University, Faculty of Information Technology and Bionics, Práter u. 50/a,  H-1083 Budapest, Hungary} 
\affil[2]{\small Systems and Control Laboratory, Institute for Computer Science and Control (SZTAKI), Kende u. 13-17, H-1111 Budapest, Hungary}
\title{Asymptotic stability of delayed complex balanced reaction networks with non-mass action kinetics}
\date{}
\begin{document}
\maketitle

\begin{abstract}
	We consider delayed chemical reaction networks with generalized kinetics of product form and show that complex balancing implies that within  each positive stoichiometric compatibility class there is a unique positive equilibrium that is locally asymptotically stable relative to its class. 
	%Then we consider delayed systems and show that complex balancing implies that each positive stoichiometric compatibility class contains precisely one positive equilibrium that is both locally asymptotically stable relative to its class. 
	The main tools of the proofs are respectively a version of the well-known classical logarithmic Lyapunov function applied to kinetic systems and its generalization to the delayed case as a Lyapunov-Krasovskii functional. Finally, we demonstrate our results through illustrative examples.
\end{abstract}

\textbf{\textit{Keywords---}} delayed chemical reaction networks, Lyapunov-Krasovskii functional, asymptotical stability, semistability

\section{Introduction}
Chemical reaction networks (also called CRNs or kinetic systems) are dynamical systems which can be formally represented as a set of (mathematically generalized) chemical reactions assuming certain reaction rates determining the velocity of the transformations of complexes to each other \cite{Feinberg2019,Chellaboina2009}. The scope of reaction networks reaches far beyond the (bio)chemical application field, since they can be considered as general descriptors of nonlinear dynamics capable of producing complex dynamical phenomena such as multiple equilibria, nonlinear oscillations, limit cycles, and even chaos \cite{Erdi1989}. It is known that majority of compartmental models used e.g., in population dynamics or epidemiology are naturally in kinetic form. Additionally, many other non-chemically motivated models can be algorithmically transformed to reaction network form \cite{Samardzija1989,Craciun2019}. Therefore, the results of chemical reaction network theory (CRNT) relating network structure and qualitative dynamics can be of general importance in the field of dynamical systems \cite{Angeli2009}.

Stability is a key qualitative property of dynamical models and their equilibria. In \cite{Horn1972}, the local stability of complex balanced equilibria of kinetic systems was shown using an entropy-like logarithmic Lyapunov function. The most well-known stability-related result in CRNT is probably the Deficiency Zero Theorem which states that weakly reversible deficiency zero CRNs are complex balanced independently of the (positive) values of reaction rate coefficients \cite{Feinberg1987}. According to the Global Attractor Conjecture, the stability of complex balanced networks is actually global within the nonnegative orthant \cite{craciun2015toric,Anderson2011}. The stability of a wide class of CRNs with more general kinetics than mass action was shown in \cite{Sontag2001}. These results were further extended in \cite{Chaves2005} for time-varying reaction rates using the notion of input-to-state stability.

%Delay in general
The explicit modeling of time delays is often necessary to understand complex dynamical phenomena in nature or technology, and to build models having sufficient level of reliability \cite{stepan1989retarded}. Various phenomena may justify the inclusion of time delays into dynamical models such as protein expression time in systems biology \cite{zhu2024modeling}, hatching or maturation time in population dynamics \cite{ruiz2019attraction}, driver reaction times in traffic flow models \cite{orosz2009exciting}, latent periods in epidemic modeling \cite{wang2023dynamic}, or communication and feedback delays in complex networks \cite{zhu2020dynamical}. The most commonly used approach in the stability analysis of time-delay system is the construction of appropriate Lyapunov-Krasovskii functionals which is generally a challenging problem \cite{fridman2014introduction}.

The main motivation for introducing delayed chemical reactions was to focus on the most important species and chemical transformations, and to avoid the detailed description of mechanisms of less interest \cite{Roussel1996}. In delayed reactions, the consumption of reactant species is immediate, while the formation of products is delayed either through discrete or distributed delays. The notion of stoichiometric compatibility classes was generalized for delayed CRNs in \cite{Liptak2018}, and it was proved using a logarithmic Lyapunov-Krasovskii functional that complex balanced networks are at least locally stable for arbitrary finite delays. An analogous result for kinetic systems with distributed delays was given in \cite{liptak2019modelling}.
In \cite{Komatsu2019} the authors prove the generalization of well-known persistence results of chemical reaction networks \cite{Angeli2007,Angeli2011} to the delayed case and illustrate the applicability of the results on a biochemical reaction network. In \cite{Komatsu2020} the authors prove a delayed version of the deficiency zero theorem and discuss global asymptotic stability using the persistence results of \cite{Komatsu2019}. In \cite{Zhang2021} the authors consider delayed complex balanced chemical reaction networks with mass action kinetics and provide several sufficient conditions for the persistence of such systems using \cite{Komatsu2019}. The applicability of these results are further improved via semilocking set decomposition in \cite{Zhang2023}. 

Using the achievements outlined above, the purpose of the present paper is to further extend stability results for delayed complex balanced kinetic systems with general (non-mass action) kinetics. For this, an appropriate Lyapunov-Krasovskii functional will be proposed through which the local asymptotic stability and semistability of the positive equilibria can be shown.

The structure of the paper is as follows. Section \ref{sec:prelim} introduces the basic notions related to kinetic systems. In Section \ref{sec:QTD_CB}, the set of positive equilibria is studied in the context of complex balancing and the quasi-thermodynamic/thermostatic properties. The main contribution can be found in \ref{sec:delay}, where the semistability of positive complex balanced equilibria is shown. Section \ref{sec:examples} contains three computational examples to illustrate the theory. Finally, conclusions of the paper are given in Section \ref{sec:concl}.

\section{Preliminaries}\label{sec:prelim}
Throughout the paper $\mathbb{R}^N$, $\mathbb{R}_+^N$ and $\overline{\mathbb{R}}_+^N$ denotes the $N$-dimensional space of real, positive and nonnegative column vectors, respectively. For $x,y,\in\mathbb{R}_+^N$ the vector exponential $x^y$ is defined as $x^y=\prod_{k=1}^Nx_k^{y_k}$ and the inner product $x\cdot y$ is defined as $x\cdot y=\sum_{k=1}^Nx_iy_i$. For $x\in\mathbb{R}_+^N$ the vector logarithm $\log(x)$ is defined element-wise. For every $\tau\ge0$ we denote the Banach space of continuous functions mapping the interval $[-\tau,0]$ into $\mathbb{R}^N$, into $\mathbb{R}_+^N$ and into $\overline{\mathbb{R}}_+^N$ by $\mathcal{C}=C\qty\big([-\tau,0],\mathbb{R}^N)$, $\mathcal{C}_+$ and $\overline{\mathcal{C}}_+$, respectively. We equip the spaces $\mathcal{C}$, $\mathcal{C}_+$ and $\overline{\mathcal{C}}_+$ with the standard norm $\norm{\psi}=\sup_{s\in[-\tau,0]}\qty\big|\psi(s)|$, and the open ball around $\psi$ with radius $\epsilon>0$ is denoted by $\mathcal{B}_{\epsilon}(\psi)$.

We consider kinetic systems of the form
\begin{equation}\label{eq:ODE}
	\dot x(t)=\sum_{k=1}^M\kappa_k\gamma^{y_k}\qty\big(x(t))\qty\big(y_{k'}-y_k),
\end{equation}
where $x(t)\in\overline{\mathbb{R}}_+^N$ is the state vector, the function $\gamma:\overline{\mathbb{R}}_+^N\mapsto\overline{\mathbb{R}}_+^N$ is defined element-wise by the increasing functions $\gamma_i\in\mathcal{C}^1(\mathbb{R})$ that further satisfy $\gamma_i(0)=0$ and $\int_0^1|\log\gamma_i(s)|\dd{s}<\infty$. While in some cases, such as in the mass action case when $\gamma$ is the identity map, the functions $\gamma_i:\overline{\mathbb{R}}_+\mapsto\overline{\mathbb{R}}_+$ are onto, this property need not hold \cite[Section IV.B]{Sontag2001}. Let us assume that the maps $\gamma_i:\overline{\mathbb{R}}_+\mapsto[0,\sigma_i)$ are onto, where $0<\sigma_i\le\infty$ and that the limit
\begin{equation}\label{eq:gamma:prop}
	\lim_{x\uparrow\log\sigma_i}\int_a^x\gamma_i^{-1}(e^s)\dd{s}-bx=\infty
\end{equation}
holds for any finite $a<\log\sigma_i$ and any $b$. This general assumption also allows functions like $\gamma_i(s)=\frac{s}{c+s}$ for any $c>0$.

The system consists of $M$ reactions, where each reaction $k$ has a source and product complex with corresponding stoichiometric vectors $y_k,~y_{k'}\in\mathbb{N}^N$, respectively, and a positive reaction rate constant $\kappa_k$. The set of stoichiometric vectors is denoted with $\mathcal{K}$. In some cases we will use the complex matrix $Y$ that has the stoichiometric vectors as columns. The reaction vector of reaction $k$ is defined as $y_{k'}-y_k$. The linear span of the reaction vectors is called the stoichiometric subspace $\mathcal{S}$ of \eqref{eq:ODE}, defined as
\begin{equation}
	\mathcal{S}=\mathrm{span}\qty\big{y_{k'}-y_k\big|k=1,2,\dots,M}
\end{equation}
and for $p\in\mathbb{R}_+^N$ the corresponding positive stoichiometric compatibility class $\mathcal{S}_p$ is defined by
\begin{equation}
	\mathcal{S}_p=\qty\big{x\in\mathbb{R}_+^N\big|x-p\in\mathcal{S}}.
\end{equation}
It is well-known that the positive stoichiometric compatibility classes are positively invariant under \eqref{eq:ODE}; that is, we have that $x(t)\in\mathcal{S}_p$ for $t\ge$ if $x(0)\in\mathcal{S}_p$.

We note that \eqref{eq:ODE} can be rewritten in matrix form as follows. Assume that the number of distinct complexes is $L$ and define $\kappa_{ij}$ as $\kappa_k$ if there is a reaction $k$ such that $y_{k'}=y_j$ and $y_k=y_i$, and zero otherwise. Denoting by $K$ the matrix defined element-wise as $[K]_{ij}=\kappa_{ij}$, the system \eqref{eq:ODE} takes the form
\begin{equation}\label{eq:ODE_matrix}
	\dot x(t)=Y\qty\big(K-\mathrm{diag}(1_L^{\top}K))\Gamma(x)=:Y\tilde K\Gamma(x),
\end{equation}
where $1_L\in\mathbb{R}^L$ denotes a matrix with all of its coordinates equal to one and $\Gamma:\overline{\mathbb{R}}_+^N\mapsto\overline{\mathbb{R}}_+^N$ is defined as
\begin{equation}
	\Gamma(x)=\qty\big[\gamma^{y_1}(x)~\gamma^{y_2}(x)~\cdots~\gamma^{y_L}(x)].
\end{equation}
Note, that $\tilde K$ is the weighted negative Laplacian of the reaction graph of the system.

\section{Quasi-thermodynamic property and complex balancing}\label{sec:QTD_CB}
In this section, we restate some of the stability results described in \cite{Sontag2001} under milder conditions using the computation approach of \cite{Feinberg2019}. 
Here we consider nondelayed kinetic systems of the form \eqref{eq:ODE}. First, let us recall some definitions. A positive vector $\overline x\in\mathbb{R}_+^N$ is called a positive equilibrium of \eqref{eq:ODE} if $x(t)\equiv\overline x$ is a solution of \eqref{eq:ODE}; that is, the equilibria of \eqref{eq:ODE} satisfy the equation
\begin{equation}
	f(\overline x):=\sum_{k=1}^M\kappa_k\gamma^{y_k}(\overline x)\qty\big(y_{k'}-y_k)=0,
\end{equation}
where $f:\overline{\mathbb{R}}_+^N\mapsto\mathcal{S}$ denotes the species formation rate function of the kinetic system \eqref{eq:ODE}.
In the classical terminology of \cite{Horn1972,Feinberg2019} a kinetic system is called quasi-thermostatic if there exists a positive vector $\overline x\in\mathbb{R}_+^N$ such that the set of positive equilibria is identical to the set
\begin{equation}
	\mathcal{E}=\qty\big{\tilde x\in\mathbb{R}_+^N\big|\log(\tilde x)-\log(\overline x)\in\mathcal{S}^{\perp}}.
\end{equation}
In this case we say that the kinetic system is \textit{quasi-thermostatic} with respect to $\overline x$. Standard arguments show that then the system is quasi-thermostatic with respect to any element of $\mathcal{E}$. The distribution of positive equilibria of quasi-thermostatic systems can be efficiently characterized, namely, each positive stoichiometric compatibility class contains precisely one positive equilibrium \cite{Horn1972}.

Furthermore, a kinetic system is called \textit{quasi-thermodynamic} if there exists an $\overline x\in\mathbb{R}_+^N$ such that the system is quasi-thermostatic with respect to $\overline x$, and
\begin{equation}\label{eq:mas:Helmholtz}
	\qty\big(\log(x)-\log(\overline x))\cdot f(x)\le0
\end{equation}
holds for $x\in\mathbb{R}_+^N$, with equality holding only if $f(x)=0$ or, equivalently, if $\rho(x)-\rho(\overline x)\in\mathcal{S}^{\perp}$. In this case we say that the kinetic system is quasi-thermodynamic with respect to $\overline x$. Similarly to quasi-thermostaticity, a system is quasi-thermodynamic with respect to any element of $\mathcal{E}$. The main consequence of quasi-thermodynamicity is that the unique positive equilibrium of each positive stoichiometric compatibility class is asymptotically stable relative to its class. This arises from the fact that the gradient of the function
\begin{equation}
	H(x,\overline x)=\sum_{i=1}^Nx_i(\log x_i-\log\overline x_i-1)
\end{equation}
is given by $\log(x)-\log(\overline x)$ which is a term in Eq. \eqref{eq:mas:Helmholtz}. Thus, the function
\begin{equation}\label{eq:mas:Lyapunov}
	V(x,\overline x)=\sum_{i=1}^N\qty\big(x_i(\log x_i-\log\overline x_i-1)+\overline x_i)=\sum_{i=1}^N\qty\bigg(x_i\log\frac{x_i}{\overline x_i}+\overline x_i-x_i)
\end{equation}
is a Lyapunov function for quasi-thermodynamic kinetic models. The short physical background of this is that $H$ was used to describe the Helmholtz free energy density of the system, and its gradient is the chemical potential function.

As noted in \cite{Horn1972}, while the above definition is physically associated with mass action kinetics and ideal gas mixtures, it could apply to any kinetic system. In some cases the definitions can be extended without voiding their consequences. In order to do so, following \cite{Sontag2001}, we define for $x\in\mathbb{R}_+^N$ the function
\begin{equation}
	\rho(x)=\log\qty\big(\gamma(x)),
\end{equation}
where $\gamma$ is defined as in Eq. \eqref{eq:ODE}.
A kinetic system of the form \eqref{eq:ODE} is called \textit{quasi-thermostatic in the generalized sense} if there exists an $\overline x\in\mathbb{R}_+^N$ such that the set of positive equilibria is identical to the set
\begin{equation}\label{eq:E_char}
	\mathcal{E}=\qty\big{\tilde x\in\mathbb{R}_+^N\big|\rho(\tilde x)-\rho(\overline x)\in\mathcal{S}^{\perp}}.
\end{equation}
For brevity, we simply say that the kinetic system is quasi-thermostatic with respect to $\overline x$. Again, similarly to classical quasi-thermostaticity, standard arguments show that then the system is quasi-thermostatic with respect to any element of $\mathcal{E}$. Furthermore, the distribution of the positive equilibria of quasi-thermostatic kinetic systems across positive stoichiometric compatibility classes can be characterized. We describe that distribution in the following proposition.

\begin{prop}\label{prop:ode:one}
	Assume that the kinetic system \eqref{eq:ODE} is quasi-thermostatic. Then, for every $p\in\mathbb{R}_+^N$ the corresponding positive stoichiometric compatibility class $\mathcal{S}_p$ contains precisely one positive equilibrium.
\end{prop}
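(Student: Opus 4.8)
The plan is to establish uniqueness and existence separately; the former is a short monotonicity computation, the latter a variational argument tailored to the structural hypotheses on the $\gamma_i$. For \emph{uniqueness}, suppose $x_1,x_2$ are positive equilibria in $\mathcal{S}_p$. By quasi-thermostaticity both lie in $\mathcal{E}$, so $\rho(x_1)-\rho(\overline x),\rho(x_2)-\rho(\overline x)\in\mathcal{S}^{\perp}$ and hence $\rho(x_1)-\rho(x_2)\in\mathcal{S}^{\perp}$; on the other hand $x_1-x_2\in\mathcal{S}$ because both points lie in $\mathcal{S}_p$. Therefore $(x_1-x_2)\cdot(\rho(x_1)-\rho(x_2))=0$. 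Since each $\gamma_i$ is strictly increasing and positive on $\mathbb{R}_+$, the map $s\mapsto\log\gamma_i(s)$ is strictly increasing, so every summand $(x_{1,i}-x_{2,i})\bigl(\log\gamma_i(x_{1,i})-\log\gamma_i(x_{2,i})\bigr)$ is nonnegative and vanishes only when $x_{1,i}=x_{2,i}$; a sum of nonnegative terms equal to zero forces $x_1=x_2$.

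For \emph{existence} I would fix the reference equilibrium $\overline x$ and seek an equilibrium of the form $x(\eta)$ determined by $\rho(x(\eta))=\rho(\overline x)+\eta$ with $\eta\in\mathcal{S}^{\perp}$; by \eqref{eq:E_char} every such $x(\eta)$ is automatically a positive equilibrium, and writing $\rho_i:=\log\gamma_i$ for the $i$-th scalar component of $\rho$, the formula $x(\eta)_i=\gamma_i^{-1}(e^{\rho_i(\overline x)+\eta_i})$ defines a point of $\mathbb{R}_+^N$ precisely when $\eta$ lies in the open box $D=\prod_{i=1}^N(-\infty,\log\sigma_i-\rho_i(\overline x))$. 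On $D\cap\mathcal{S}^{\perp}$ consider the function
\[
\Phi(\eta)=\sum_{i=1}^N\left(\int_{\rho_i(\overline x)}^{\rho_i(\overline x)+\eta_i}\gamma_i^{-1}(e^{s})\,\mathrm{d}s-p_i\eta_i\right),
\]
whose $\mathbb{R}^N$-gradient is $x(\eta)-p$ and which is strictly convex because $s\mapsto\gamma_i^{-1}(e^{s})$ is strictly increasing. If $\Phi$ attains its infimum over $D\cap\mathcal{S}^{\perp}$ at an interior point $\eta^{*}$, then its directional derivatives along $\mathcal{S}^{\perp}$ vanish there, i.e.\ $x(\eta^{*})-p\perp\mathcal{S}^{\perp}$, hence $x(\eta^{*})-p\in\mathcal{S}$, so $x(\eta^{*})\in\mathcal{S}_p$ is the required positive equilibrium; strict convexity makes $\eta^{*}$ unique, which, once one notes that $\eta\mapsto x(\eta)$ maps $D\cap\mathcal{S}^{\perp}$ bijectively onto $\mathcal{E}$, reproves uniqueness as well. (This $\Phi$ is the convex-duality counterpart of minimizing a generalization of the classical function \eqref{eq:mas:Lyapunov} over $\mathcal{S}_p$; the dual form is chosen because \eqref{eq:gamma:prop} is exactly the coercivity condition it needs, whereas the primal function need not be coercive when some $\sigma_i$ is finite.)

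It remains to show the infimum is attained in the interior of $D$, and this is the heart of the matter. Each summand $\psi_i(\eta_i)=\int_{\rho_i(\overline x)}^{\rho_i(\overline x)+\eta_i}\gamma_i^{-1}(e^{s})\,\mathrm{d}s-p_i\eta_i$ is strictly convex on its interval domain; as $\eta_i\downarrow-\infty$ the integral converges to a finite limit --- this is exactly where $\int_0^1|\log\gamma_i(s)|\,\mathrm{d}s<\infty$ is used, after substituting $u=e^{s}$ and integrating by parts --- while $-p_i\eta_i\to+\infty$ since $p_i>0$; and as $\eta_i$ increases to the right endpoint $\log\sigma_i-\rho_i(\overline x)$, hypothesis \eqref{eq:gamma:prop} applied with $b=p_i$ forces $\psi_i(\eta_i)\to+\infty$. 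Hence every $\psi_i$ is bounded below and blows up at both ends of its domain, so $\Phi=\sum_i\psi_i$ is bounded below, lower semicontinuous on $\overline D$ (with value $+\infty$ on the boundary faces it reaches), and coercive along $D\cap\mathcal{S}^{\perp}$: a sequence eventually leaving every compact subset of $D\cap\mathcal{S}^{\perp}$ has some coordinate approaching an endpoint of its domain, along which $\Phi\to+\infty$. A standard lower-semicontinuity argument then yields a minimizer, which cannot lie on $\partial D$, completing the proof. I expect the two one-dimensional limiting estimates for $\psi_i$ --- converting the log-integrability condition and \eqref{eq:gamma:prop} into the boundary blow-up needed for coercivity --- to be the only genuinely delicate points.
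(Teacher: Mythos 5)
Your proof is correct. The uniqueness half is exactly the paper's argument: $\tilde x-\overline x\in\mathcal{S}$ and $\rho(\tilde x)-\rho(\overline x)\in\mathcal{S}^{\perp}$ give a zero inner product whose summands are nonnegative by monotonicity of $\log\gamma_i$. For existence the paper is much terser: it simply invokes \cite[Proposition B.1]{Feinberg1995} to produce $\mu\in\mathcal{S}^{\perp}$ with $\gamma(\overline x)e^{\mu}-p\in\mathcal{S}$ and sets $\gamma(\tilde x)=\gamma(\overline x)e^{\mu}$ --- note that as literally written this yields $\gamma(\tilde x)-p\in\mathcal{S}$ rather than $\tilde x-p\in\mathcal{S}$, and the paper's subsequent remark concedes that assumption \eqref{eq:gamma:prop} is used implicitly, deferring the honest argument to \cite[Lemma IV.1]{Sontag2001} and to the delayed Proposition \ref{prop:dde:one}. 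Your variational construction is essentially that deferred argument, specialized to $\tau_k=0$ and written in the dual coordinates $\eta=\rho(x)-\rho(\overline x)$: your $\Phi$ is, up to an additive constant and this change of variables, the function $g$ of Proposition \ref{prop:dde:one} with the delay terms dropped, your coercivity at the right endpoint is exactly where the paper uses \eqref{eq:gamma:prop}, and your interior-minimizer/vanishing-directional-derivative step is the paper's $\nabla g(\mu)\in\mathcal{S}$ step. The one place you go beyond the paper is the left-endpoint estimate, where you correctly identify $\int_0^1|\log\gamma_i(s)|\,\dd{s}<\infty$ as the hypothesis making $\int_{-\infty}^{a}\gamma_i^{-1}(e^{s})\,\dd{s}$ finite (via the substitution $u=e^s$ and integration by parts, using that $\gamma_i^{-1}(\epsilon)\,|\log\epsilon|\to0$); the paper never spells this out. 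Net effect: your proof is self-contained where the paper's relies on a citation that strictly applies to the mass action case, and it is fully consistent with the method the paper itself uses for the delayed generalization.
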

\begin{proof}
	We first show the existence of a point in $\mathcal{S}_p\cap\mathcal{E}$. Let $\overline x$ be an element of $\mathcal{E}$. By \cite[Proposition B.1]{Feinberg1995} there exists a (unique) vector $\mu\in\mathcal{S}^{\perp}$ such that
	\begin{equation}
		\gamma(\overline x)e^{\mu}-p\in\mathcal{S}.
	\end{equation}
	Let $\tilde x$ be defined by
	\begin{equation}
		\gamma(\tilde x):=\gamma\qty(\overline x)e^{\mu}.
	\end{equation}
	Then $\tilde x\in\mathcal{S}_p$ and taking logarithm shows that
	\begin{equation}
		\rho(\tilde x)-\rho(\overline x)=\mu\in\mathcal{S}^{\perp};
	\end{equation}
	that is, we have that $\tilde x\in\mathcal{E}$ as well.

	In order to show uniqueness, let us assume by contradiction that $\tilde x$ and $\overline x$ are distinct positive equilibria in $\mathcal{S}_p$. Then $\tilde x-\overline x\in\mathcal{S}$ and $\rho(\tilde x)-\rho(\overline x)\in\mathcal{S}^{\perp}$, and thus
	\begin{equation}
		0=\qty\big(\rho(\tilde x)-\rho(\overline x))\cdot(\tilde x-\overline x)=\sum_{i=1}^N\qty\big(\log\gamma_i(\tilde x_i)-\log\gamma_i(\overline x_i))(\tilde x_i-\overline x_i).
	\end{equation}
	Since the functions $\gamma_i$ and the logarithm are strictly increasing, the above expression is zero if and only if $\tilde x=\overline x$.
\end{proof}

\begin{rem}
	Note that we implicitly used the assumption \eqref{eq:gamma:prop}, see \cite[Lemma IV.1]{Sontag2001} and Proposition \ref{prop:dde:one} for more details.
\end{rem}

A kinetic system of the form \eqref{eq:ODE} is called \textit{quasi-thermodynamic in the generalized sense} if there exists an $\overline x\in\mathbb{R}_+^N$ such that the system is quasi-thermostatic with respect to $\overline x$ and
\begin{equation}\label{eq:gen:Helmholtz}
	\qty\big(\rho(x)-\rho(\overline x))\cdot f(x)\le0
\end{equation}
holds for $x\in\mathbb{R}_+^N$, where equality holds only if $f(x)=0$ or, equivalently, if $\rho(x)-\rho(\overline x)\in\mathcal{S}^{\perp}$. Again, for brevity, we simply say that the kinetic system is quasi-thermodynamic with respect to $\overline x$, however, similarly to quasi-thermostaticity, a system is quasi-thermodynamic with respect to any element of $\mathcal{E}$.

The following proposition and its proof shows that the underlying function
\begin{equation}\label{eq:gen:Lyapunov}
	V(x,\overline x)=\sum_{i=1}^N\int_{\overline x_i}^{x_i}\qty\big(\log\gamma_i(s)-\log\gamma_i(\overline x_i))\dd{s}
\end{equation}
is a Lyapunov function of the system \eqref{eq:ODE}. Note, that \eqref{eq:gen:Lyapunov} reduces to \eqref{eq:mas:Lyapunov} in the mass action case.

\begin{prop}
	Assume that the kinetic system \eqref{eq:ODE} is quasi-thermodynamic. Then, each positive stoichiometric compatibility class contains precisely one positive equilibrium and that equilibrium is asymptotically stable, and there is no nontrivial periodic trajectory along which all species concentrations are positive.
\end{prop}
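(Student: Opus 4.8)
The plan is to dispatch the three assertions in turn: the first is immediate from what precedes, and the other two rest on the claim that the function $V(\cdot,\overline x)$ from \eqref{eq:gen:Lyapunov} is a strict Lyapunov function for \eqref{eq:ODE} relative to the compatibility class containing $\overline x$. Since quasi-thermodynamicity entails quasi-thermostaticity, Proposition \ref{prop:ode:one} already yields that each positive stoichiometric compatibility class $\mathcal{S}_p$ contains exactly one positive equilibrium; denote it $\overline x$, and recall that the system is quasi-thermodynamic with respect to every element of $\mathcal{E}$, hence with respect to this $\overline x$.

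Next I would verify the Lyapunov properties of $V(x):=V(x,\overline x)$. Differentiating \eqref{eq:gen:Lyapunov} under the integral sign coordinatewise gives $\partial_{x_i}V(x)=\log\gamma_i(x_i)-\log\gamma_i(\overline x_i)$, that is $\nabla V(x)=\rho(x)-\rho(\overline x)$; since each $\log\gamma_i$ is strictly increasing, $V$ is strictly convex on $\mathbb{R}_+^N$ with unique critical point, hence unique minimiser, at $\overline x$, so $V(\overline x)=0$ and $V(x)>0$ for $x\neq\overline x$. Along any solution of \eqref{eq:ODE}, the chain rule together with \eqref{eq:gen:Helmholtz} gives
\[
\dot V(x(t)) = \nabla V(x(t))\cdot\dot x(t) = \bigl(\rho(x(t))-\rho(\overline x)\bigr)\cdot f(x(t)) \le 0,
\]
with equality only when $f(x(t))=0$. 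Lyapunov stability of $\overline x$ relative to $\mathcal{S}_p$ follows immediately. For asymptotic stability I would apply LaSalle's invariance principle: taking a closed ball $\overline B$ about $\overline x$ inside the open orthant and a constant $c>0$ below the (positive) minimum of $V$ on $\partial\overline B$, the set $\Omega_c=\{x\in\mathcal{S}_p:V(x)\le c\}\cap\overline B$ is compact and, because $\dot V\le 0$, positively invariant; the largest invariant subset of $\{x\in\Omega_c:\dot V(x)=0\}$ is contained in the set of positive equilibria in $\mathcal{S}_p$, which by Proposition \ref{prop:ode:one} equals $\{\overline x\}$, so every trajectory entering $\Omega_c$ converges to $\overline x$. (The growth condition \eqref{eq:gamma:prop}, cf.\ the remark after Proposition \ref{prop:ode:one} and \cite[Lemma IV.1]{Sontag2001}, is what keeps the sublevel sets of $V$ from escaping to the boundary of the orthant and in fact lets one enlarge the basin to the whole class.)

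For the absence of nontrivial positive periodic orbits, suppose $x(\cdot)$ is a $T$-periodic solution with $T>0$ and $x_i(t)>0$ for all $i,t$. Its trajectory, being connected and invariant, lies in a single class $\mathcal{S}_p$ with unique equilibrium $\overline x$; since the trajectory has compact image in $\mathbb{R}_+^N$, the map $t\mapsto V(x(t))$ is finite and continuous, non-increasing by the computation above, and $T$-periodic, hence constant. Then $\dot V(x(t))\equiv 0$, which forces $f(x(t))\equiv 0$, i.e.\ $x(\cdot)$ is constant, contradicting nontriviality.

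The main obstacle I anticipate is not a single computation but making the asymptotic-stability step fully rigorous: one must exhibit a compact, positively invariant neighbourhood of $\overline x$ within $\mathcal{S}_p$ on which LaSalle applies — which is precisely where the properness of $V$ encoded in \eqref{eq:gamma:prop} is needed — and one must confirm that the zero set of $\dot V$ inside a class reduces to the single equilibrium $\overline x$ rather than some larger invariant set, which again comes down to the uniqueness statement of Proposition \ref{prop:ode:one}.
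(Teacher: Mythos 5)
Your proposal is correct and follows essentially the same route as the paper: uniqueness from quasi-thermostaticity (Proposition \ref{prop:ode:one}), the function $V(\cdot,\overline x)$ of \eqref{eq:gen:Lyapunov} as a Lyapunov function with $\nabla V=\rho(\cdot)-\rho(\overline x)$ and $\dot V\le 0$ strict away from $\overline x$ within the class, and monotonicity of $V$ along trajectories to exclude positive periodic orbits. The only cosmetic difference is that you invoke LaSalle on a compact sublevel set where the paper simply observes that $\dot V<0$ on $\mathcal{S}_p\setminus\{\overline x\}$ (since the only zero of $\dot V$ in the class is the unique equilibrium) and appeals to standard strict-Lyapunov arguments.
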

\begin{proof}
	The fact that each positive stoichiometric compatibility class contains precisely one positive equilibrium follows from quasi-thermostaticity.

	Let us consider any positive stoichiometric compatibility class $\mathcal{S}_p$ and denote its unique positive equilibrium by $\overline x$. Then, for any $x\in\mathcal{S}_p$ other than $\overline x$, we have that
	\begin{equation}\label{eq:QTD:strict}
		\qty\big(\rho(x)-\rho(\overline x))\cdot f(x)<0.
	\end{equation}
	It is easy to see that $V(x,\overline x)\ge0$ and equality holds only if $x=\overline x$, and that $\nabla V(x,\overline x)=\rho(x)-\rho(\overline x)$. This, combined with \eqref{eq:QTD:strict} show that 
	\begin{equation}
		\nabla V(x,\overline x)\cdot f(x)<0
	\end{equation}
	holds for any $x\in\mathcal{S}_p$ other than $\overline x$. Standard arguments show that $V(x,\overline x)$ is a strict Lyapunov function for $\overline x$ on its positive stoichiometric compatibility class $\mathcal{S}_p$, thus $\overline x$ is asymptotically stable relative to $\mathcal{S}_p$.

	To show that no nontrivial periodic trajectories can exist along which all species concentrations are positive, assume by contradiction that $x:[0,T]\mapsto\mathbb{R}_+^N$ is such a solution with $x(T)=x(0)$ and denote the unique positive equilibrium of the corresponding positive stoichiometric compatibility class by $\overline x$. Then
	\begin{equation}
		V\qty\big(x(T),\overline x)-V\qty\big(x(0),\overline x)=\int_0^T\nabla V\qty\big(x(t),\overline x)\cdot f\qty\big(x(t))\dd{t}<0,
	\end{equation}
	and thus
	\begin{equation}
		V\qty\big(x(T),\overline x)<V\qty\big(x(0),\overline x),
	\end{equation}
	contradicting $x(T)=x(0)$.
\end{proof}

In \cite{Sontag2001} the author considers systems of the form \eqref{eq:ODE} or, equivalently, of the form \eqref{eq:ODE_matrix}, and assumes that the complex matrix $Y$ is of full rank and none of its rows vanishes, and that $\tilde K$ is irreducible (implying that the reaction graph is strongly connected). Then, without using the above terminology, the author shows that such systems are quasi-thermodynamic. We note, that these assumptions imply that if $\overline x$ is an equilibrium of \eqref{eq:ODE_matrix}, then $\tilde K\Gamma(\overline x)=0$; that is, the vector $\Gamma(\overline x)$ is in the kernel of $\tilde K$. Thus, systems that satisfy the above assumptions are complex balanced, defined as follows.

Without any restrictions on $Y$ or assuming that $\tilde K$ is irreducible, an equilibrium $\overline x$ is called \textit{complex balanced} if $\tilde K\Gamma(\overline x)=0$ or, equivalently, if for every complex $\eta\in\mathcal{K}$ we have that
\begin{equation}
	\sum_{k:\eta=y_k}\kappa_k\gamma^{y_k}(\overline x)=\sum_{k:\eta=y_{k'}}\kappa_k\gamma^{y_k}(\overline x),
\end{equation}
where the sum on the left-hand side is taken over the reactions where $\eta$ is the source complex and the sum on the right-hand side is taken over the reactions where $\eta$ is the product complex. Therefore, complex balanced equilibria are also called vertex-balanced in the literature \cite{muller2023new}. We note that this setting is indeed more general than that of \cite{Sontag2001}, as for mass action systems complex balancing can occur in weakly reversible systems, not just in strongly connected systems; that is, there can be more than one linkage classes. 

First, we show that the existence of a positive complex balanced equilibrium affects every positive equilibrium.

\begin{prop}
	Assume that the kinetic system \eqref{eq:ODE} admits a positive complex balanced equilibrium. Then every positive equilibrium is complex balanced.
\end{prop}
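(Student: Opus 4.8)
The plan is to follow the classical Horn--Jackson strategy, adapted to the generalized kinetics: the equilibrium equation of an arbitrary positive equilibrium, once paired against the complex balancing of the given equilibrium, is forced to become a complex balancing equation by a convexity inequality in complex space.

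Concretely, let $\overline x\in\mathbb{R}_+^N$ be the assumed positive complex balanced equilibrium, so that $\tilde K\Gamma(\overline x)=0$ with $\Gamma(\overline x)$ strictly positive, and let $x^*\in\mathbb{R}_+^N$ be an arbitrary positive equilibrium, so that $f(x^*)=Y\tilde K\Gamma(x^*)=0$. The goal is $\tilde K\Gamma(x^*)=0$. First I would pair $\rho(x^*)-\rho(\overline x)$ against the vanishing vector $Y\tilde K\Gamma(x^*)$, transpose $Y$ onto the first factor, and use $y_j\cdot\rho(x)=\log\Gamma(x)_j$ (immediate from $\Gamma(x)_j=\gamma^{y_j}(x)$ and $\rho(x)=\log\gamma(x)$) to obtain
\begin{equation}
	\sum_{j=1}^L\log\frac{\Gamma(x^*)_j}{\Gamma(\overline x)_j}\,(\tilde K\Gamma(x^*))_j=\bigl(\rho(x^*)-\rho(\overline x)\bigr)\cdot Y\tilde K\Gamma(x^*)=0 .
\end{equation}

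The substantive step --- and the one I expect to be the main obstacle --- is a purely combinatorial lemma about the Laplacian $\tilde K$: if $a,b\in\mathbb{R}_+^L$ and $\tilde K b=0$, then $\sum_j\log(a_j/b_j)(\tilde K a)_j\le 0$, with equality if and only if $\tilde K a=0$. To prove this I would expand $(\tilde K a)_j$ through the entrywise description of $\tilde K$ (nonnegative off-diagonal entries $\kappa_{jm}$, zero column sums), set $\xi_j:=a_j/b_j$, use $\tilde K b=0$ to re-express the diagonal contributions, and thereby reduce the expression to a sum $\sum\kappa_{jm}b_m(\xi_m-\xi_j)\log\xi_j$ running over the reactions $m\to j$; applying the elementary estimate $\xi_m\log(\xi_j/\xi_m)\le\xi_j-\xi_m$ to each reaction and regrouping the resulting terms by source complex and by target complex --- again invoking $\tilde K b=0$ and $1_L^{\top}\tilde K=0$ --- the upper bound collapses to $0$. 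Equality requires $\xi_j=\xi_m$ across every reaction, hence $\xi$ is constant on each linkage class; since $\tilde K$ is block diagonal with respect to linkage classes and annihilates $b$, it then annihilates $a$ as well. Finally, applying the lemma with $a=\Gamma(x^*)$ and $b=\Gamma(\overline x)$ and comparing with the display forces the equality case, so $\tilde K\Gamma(x^*)=0$; that is, $x^*$ is complex balanced. What remains is only the routine index bookkeeping in the lemma's proof.
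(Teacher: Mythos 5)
Your argument is correct, and it is organized differently from the paper's proof even though both rest on the same elementary convexity estimate. The paper's proof begins by asserting that any positive equilibrium $\tilde x$ lies in $\mathcal{E}$, i.e.\ that $\rho(\tilde x)-\rho(\overline x)\in\mathcal{S}^{\perp}$ --- a fact that is really only justified by the inequality \eqref{eq:CB->QTD:Lyapunov} established in the \emph{subsequent} proposition (complex balancing implies quasi-thermodynamicity), since $f(\tilde x)=0$ forces the equality case there --- and then performs a per-complex factorization: orthogonality of $\rho(\tilde x)-\rho(\overline x)$ to every reaction vector makes the factor $e^{q_k(\tilde x)}$ common to all reactions incident to a fixed complex $\eta$, so the net flux at $\eta$ under $\tilde x$ is $e^{q_{\eta}(\tilde x)}$ times the net flux under $\overline x$, which vanishes. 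You instead absorb both steps into a single Laplacian-level lemma and read $\tilde K\Gamma(x^*)=0$ directly off its equality case. The core inequality is literally the same: your $\xi_m\log(\xi_j/\xi_m)\le\xi_j-\xi_m$ is \eqref{eq:exp_ab} with $\xi=e^{q}$, and your balance condition $\sum_m c_{jm}=\sum_m c_{mj}$ is exactly the complex balancing of $\overline x$ (note $\Gamma(\overline x)>0$, so $c_{jm}>0$ precisely on the reaction edges, which is what makes the equality analysis via constancy on linkage classes and block-diagonality of $\tilde K$ go through). What your packaging buys is self-containedness: no forward reference to the quasi-thermostatic characterization of the equilibrium set is needed, since complex balancing of $x^*$ drops out of the equality case rather than being verified complex by complex afterwards. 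The deferred ``index bookkeeping'' in your lemma does close up as claimed, so the proof is complete.
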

\begin{proof}
	Let us assume that $\overline x\in\mathbb{R}_+^N$ is a positive complex balanced equilibrium and $\tilde x\in\mathbb{R}_+^N$ is a positive equilibrium other than $\overline x$. Then $\tilde x\in\mathcal{E}$; that is, we have that $\rho(\tilde x)-\rho(\overline x)\in\mathcal{S}^{\perp}$. Let us define for $k=1,2,\dots,M$ the function $q_k:\mathbb{R}_+^N\mapsto\mathbb{R}$ by
	\begin{equation}
		q_k(x)=\qty\big(\rho(x)-\rho(\overline x))\cdot y_k.
	\end{equation}
	Then, for any complex $\eta\in\mathcal{K}$ we have that
	\begin{equation}
		\begin{aligned}
			\sum_{k:\eta=y_k}\kappa_k\gamma^{y_k}(\tilde x)-\sum_{k:\eta=y_{k'}}\kappa_k\gamma^{y_k}(\tilde x)&=\sum_{k:\eta=y_k}\kappa_k\gamma^{y_k}(\overline x)e^{q_k(\tilde x)}-\sum_{k:\eta=y_{k'}}\kappa_k\gamma^{y_k}(\overline x)e^{q_k(\tilde x)}\\
			&=e^{q_{\eta}(\tilde x)}\qty\Bigg(\sum_{k:\eta=y_k}\kappa_k\gamma^{y_k}(\overline x)-\sum_{k:\eta=y_{k'}}\kappa_k\gamma^{y_k}(\overline x))=0,
		\end{aligned}
	\end{equation}
	thus $\tilde x$ is indeed complex balanced.
\end{proof}

Finally, the connection between complex balanced systems and quasi-thermodynamic systems are described in the following proposition.

\begin{prop}
	Assume that the kinetic system \eqref{eq:ODE} is complex balanced. Then it is quasi-thermodynamic.
\end{prop}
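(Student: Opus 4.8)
The plan is to exhibit the single vector $\overline{x}$ witnessing quasi-thermodynamicity, namely the positive complex balanced equilibrium that exists by hypothesis, and to check both defining properties (quasi-thermostaticity and the dissipation inequality \eqref{eq:gen:Helmholtz}) from one computation. So fix a positive complex balanced equilibrium $\overline{x}\in\mathbb{R}_+^N$, so that $\tilde K\Gamma(\overline{x})=0$, i.e. $\sum_{k:\eta=y_k}\kappa_k\gamma^{y_k}(\overline{x})=\sum_{k:\eta=y_{k'}}\kappa_k\gamma^{y_k}(\overline{x})$ for every complex $\eta\in\mathcal{K}$. I claim the system is quasi-thermodynamic with respect to this $\overline{x}$.

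The core of the argument is an estimate for $(\rho(x)-\rho(\overline{x}))\cdot f(x)$. Fix $x\in\mathbb{R}_+^N$ and set $\mu=\rho(x)-\rho(\overline{x})$. Since $\log\gamma^{y_k}(x)=y_k\cdot\rho(x)$, we get $\gamma^{y_k}(x)=\gamma^{y_k}(\overline{x})\,e^{\,y_k\cdot\mu}$, and writing $\phi_k=\kappa_k\gamma^{y_k}(\overline{x})>0$,
\[
(\rho(x)-\rho(\overline{x}))\cdot f(x)=\sum_{k=1}^{M}\phi_k\,e^{\,y_k\cdot\mu}\bigl((y_{k'}-y_k)\cdot\mu\bigr).
\]
The elementary convexity bound $e^{a}(b-a)\le e^{b}-e^{a}$ for all real $a,b$, with equality iff $a=b$, applied termwise with $a=y_k\cdot\mu$, $b=y_{k'}\cdot\mu$, gives
\[
(\rho(x)-\rho(\overline{x}))\cdot f(x)\le\sum_{k=1}^{M}\phi_k\bigl(e^{\,y_{k'}\cdot\mu}-e^{\,y_k\cdot\mu}\bigr)=\sum_{\eta\in\mathcal{K}}e^{\,\eta\cdot\mu}\Bigl(\sum_{k:\eta=y_{k'}}\phi_k-\sum_{k:\eta=y_k}\phi_k\Bigr)=0,
\]
where the reorganization into a sum over complexes uses that $e^{\,y_{k'}\cdot\mu}=e^{\,\eta\cdot\mu}$ whenever $y_{k'}=\eta$, and the final vanishing is exactly the complex balancing of $\overline{x}$. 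This proves \eqref{eq:gen:Helmholtz}.

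For the rest, analyze the equality case. Equality in the chain above forces $y_k\cdot\mu=y_{k'}\cdot\mu$ for every reaction $k$ (as $\phi_k>0$), i.e. $(y_{k'}-y_k)\cdot\mu=0$ for all $k$, i.e. $\mu\in\mathcal{S}^{\perp}$, i.e. $x\in\mathcal{E}$. Conversely, if $\mu\in\mathcal{S}^{\perp}$ then $e^{\,y_k\cdot\mu}=e^{\,y_{k'}\cdot\mu}$ for each $k$, so substituting $\gamma^{y_k}(x)=\gamma^{y_k}(\overline{x})e^{\,y_k\cdot\mu}$ into the complex balancing equations for $\overline{x}$ and factoring out $e^{\,\eta\cdot\mu}$ at each complex $\eta$ (exactly the computation in the proof that every positive equilibrium is complex balanced) yields $\tilde K\Gamma(x)=0$, hence $f(x)=Y\tilde K\Gamma(x)=0$. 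Therefore $f(x)=0\iff\rho(x)-\rho(\overline{x})\in\mathcal{S}^{\perp}\iff$ equality holds in \eqref{eq:gen:Helmholtz}; in particular the set of positive equilibria equals $\mathcal{E}$, so the system is quasi-thermostatic with respect to $\overline{x}$, and together with \eqref{eq:gen:Helmholtz} this is precisely quasi-thermodynamicity.

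The only genuine work is the bookkeeping in the key estimate: rewriting the scalar product as a sum over reactions, applying the scalar inequality $e^{a}(b-a)\le e^{b}-e^{a}$ termwise, and regrouping the resulting bound into a sum over complexes so that the vertex-balancing relations for $\overline{x}$ can be invoked, all while tracking when equality occurs. I expect that regrouping and the equality bookkeeping to be the main (if modest) obstacle; note that, in contrast to Proposition \ref{prop:ode:one}, the argument is purely algebraic and makes no use of the growth assumption \eqref{eq:gamma:prop}, which is only needed to place an equilibrium in each compatibility class.
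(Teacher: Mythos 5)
Your proof is correct and follows essentially the same route as the paper's: the same rewriting of $(\rho(x)-\rho(\overline x))\cdot f(x)$ as a sum over reactions weighted by $\kappa_k\gamma^{y_k}(\overline x)e^{y_k\cdot\mu}$ (the paper's $q_k(x)$ is your $y_k\cdot\mu$), the same termwise use of $e^a(b-a)\le e^b-e^a$ followed by regrouping over complexes, and the same two-directional equality analysis establishing quasi-thermostaticity. Your closing remark that the argument is purely algebraic and does not invoke \eqref{eq:gamma:prop} is accurate and consistent with the paper.
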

\begin{proof}
	Let us consider the positive complex balanced equilibrium $\overline x$; that is, the equality
	\begin{equation}
		\sum_{k:\eta=y_k}\kappa_k\gamma^{y_k}(\overline x)=\sum_{k:\eta=y_{k'}}\kappa_k\gamma^{y_k}(\overline x)
	\end{equation}
	holds for any complex $\eta\in\mathcal{K}$. Observe that for any $x\in\mathbb{R}_+^N$ we have that
	\begin{equation}
		\qty\big(\rho(x)-\rho(\overline x))\cdot f(x)=\sum_{k=1}^M\kappa_k\gamma^{y_k}(x)\qty\big(q_{k'}(x)-q_k(x))=\sum_{k=1}^M\kappa_k\gamma^{y_k}(\overline x)e^{q_k(x)}\qty\big(q_{k'}(x)-q_k(x)).
	\end{equation}
	Using the well-known inequality
	\begin{equation}\label{eq:exp_ab}
		e^a(b-a)\le e^b-e^a
	\end{equation}
	leads to
	\begin{equation}\label{eq:CB->QTD:Lyapunov}
		\begin{aligned}
			\qty\big(\rho(x)-\rho(\overline x))\cdot f(x)&\le\sum_{k=1}^M\kappa_k\gamma^{y_k}(\overline x)\qty\big(e^{q_{k'}(x)}-e^{q_k(x)})\\
			&=\sum_{\eta\in\mathcal{K}}e^{q_{\eta}(x)}\qty\Bigg(\sum_{k:\eta=y_{k'}}\kappa_k\gamma^{y_k}(\overline x)-\sum_{k:\eta=y_k}\kappa_k\gamma^{y_k}(\overline x))=0,
		\end{aligned}
	\end{equation}
	where equality holds if and only if $q_{k'}(x)=q_k(x)$ for each reaction $k=1,2,\dots,M$; that is, if and only if $\rho(x)-\rho(\overline x)$ lies in $\mathcal{S}^{\perp}$. In particular, if $f(x)=0$, then $\rho(x)-\rho(\overline x)$ lies in $\mathcal{S}^{\perp}$. It remains to be shown that if $\rho(x)-\rho(\overline x)$ lies in $\mathcal{S}^{\perp}$, then $f(x)=0$, as a quasi-thermodynamic system needs to be quasi-thermostatic as well. Rewrite the species formation rate function as
	\begin{equation}
		\begin{aligned}
			f(x)&=\sum_{\eta\in\mathcal{K}}\eta\qty\Bigg(\sum_{k:\eta=y_{k'}}\kappa_k\gamma^{y_k}(x)-\sum_{k:\eta=y_k}\kappa_k\gamma^{y_k}(x))\\
			&=\sum_{\eta\in\mathcal{K}}\eta\qty\Bigg(\sum_{k:\eta=y_{k'}}\kappa_k\gamma^{y_k}(\overline x)e^{q_k(x)}-\sum_{k:\eta=y_k}\kappa_k\gamma^{y_k}(\overline x)e^{q_k(x)}).
		\end{aligned}
	\end{equation}
	If $x$ is such that $\rho(x)-\rho(\overline x)\in\mathcal{S}^{\perp}$, then $\rho(x)-\rho(\overline x)$ is orthogonal to every reaction vector, and thus
	\begin{equation}
		f(x)=\sum_{\eta\in\mathcal{K}}e^{q_{\eta}(x)}\eta\qty\Bigg(\sum_{k:\eta=y_{k'}}\kappa_k\gamma^{y_k}(\overline x)-\sum_{k:\eta=y_k}\kappa_k\gamma^{y_k}(\overline x))=0;
	\end{equation}
	that is, the vector $x$ is an equilibrium. This shows that the set of positive equilibria coincides with the set $\mathcal{E}$, and thus the system is quasi-thermostatic. This, combined with \eqref{eq:CB->QTD:Lyapunov} shows that the system is quasi-thermodynamic as well.
\end{proof}

\section{Stability of delayed kinetic models}\label{sec:delay}
In this section, we consider kinetic systems with delayed reactions having the form
\begin{equation}\label{eq:DDE}
	\dot x(t)=\sum_{k=1}^M\kappa_k\qty\Big(\gamma^{y_k}\qty\big(x(t-\tau_k))y_{k'}-\gamma^{y_k}\qty\big(x(t))y_k),
\end{equation}
where $\tau_k\ge0$ are discrete constant time delays. The solution corresponding to an initial function $\psi\in\overline{\mathcal{C}}_+$ at time $t\ge0$ is denoted by $x^{\psi}(t)\in\overline{\mathbb{R}}_+^N$ or by $x_t^{\psi}\in\overline{\mathcal{C}}_+$ when we use it as a function.

First, we extend the notion of positive stoichiometric compatibility classes to the delayed kinetic system \eqref{eq:DDE}. We note, that the following definition and invariance proof was already established in \cite{Liptak2018} in the case of mass action kinetics and in \cite{Komatsu2020} in the general case. For each $v\in\mathbb{R}^{N}$ define the functional $c_v:\mathcal{C}_+\mapsto\mathbb{R}$ as
\begin{equation}
	c_v(\psi)=v\cdot\qty\Bigg[\psi(0)+\sum_{k=1}^M\qty\Bigg(\kappa_k\int_{-\tau_k}^0\gamma^{y_k}\qty\big(\psi(s))\dd{s})y_k],\qquad\psi\in\mathcal{C}_+.
\end{equation}
For each $\theta\in\mathcal{C}_+$ the positive stoichiometric compatibility class of \eqref{eq:DDE} corresponding to $\theta$ is denoted by $\mathcal{D}_{\theta}$ and is defined by
\begin{equation}
	\mathcal{D}_{\theta}=\qty\big{\psi\in\mathcal{C}_+\big|c_v(\psi)=c_v(\theta)\text{ for all }v\in\mathcal{S}^{\perp}}.
\end{equation}
Clearly, $\psi\in\mathcal{D}_{\theta}$ if and only if $\psi\in\mathcal{C}_+$ and
\begin{equation}\label{eq:Dt_char}
	\psi(0)-\theta(0)+\sum_{k=1}^M\qty\Bigg(\kappa_k\int_{-\tau_k}^0\qty\Big(\gamma^{y_k}\qty\big(\psi(r))-\gamma^{y_k}\qty\big(\theta(s)))\dd{s})y_k\in\mathcal{S}.
\end{equation}
This shows that if each delay $\tau_k$ is zero, then the delayed positive stoichiometric compatibility classes reduce to the positive compatibility classes of \eqref{eq:ODE}.

The following Proposition established the invariance property of $\mathcal{D}_{\theta}$.

\begin{prop}
	For every $\theta\in\mathcal{C}_+$ the positive stoichiometric compatibility class $\mathcal{D}_{\theta}$ is a closed subset of $\mathcal{C}_+$. Moreover, $\mathcal{D}_{\theta}$ is positively invariant under \eqref{eq:DDE}; that is, if $\psi\in\mathcal{D}_{\theta}$, then $x_t^{\psi}\in\mathcal{D}_{\theta}$ for all $t\ge0$.
\end{prop}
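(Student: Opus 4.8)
The plan has two essentially independent parts. To see that $\mathcal{D}_\theta$ is closed in $\mathcal{C}_+$, I would first check that each functional $c_v:\mathcal{C}_+\to\mathbb{R}$ is continuous for the sup-norm: the evaluation $\psi\mapsto\psi(0)$ is continuous, and since every monomial $\gamma^{y_k}=\prod_{i}\gamma_i^{[y_k]_i}$ is continuous on $\overline{\mathbb{R}}_+^N$, uniform convergence $\psi_n\to\psi$ in $\mathcal{C}_+$ forces $\gamma^{y_k}(\psi_n(\cdot))\to\gamma^{y_k}(\psi(\cdot))$ uniformly on $[-\tau_k,0]$ (all relevant values lie in one compact subset of $\mathbb{R}_+^N$), so the integral terms of $c_v$ converge. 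Fixing a finite basis $v_1,\dots,v_m$ of $\mathcal{S}^\perp$ then gives
\[
	\mathcal{D}_\theta=\bigcap_{j=1}^{m}c_{v_j}^{-1}\big(\{c_{v_j}(\theta)\}\big),
\]
a finite intersection of sets closed in $\mathcal{C}_+$, hence closed.

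For positive invariance, the central object is, for a solution $x=x^{\psi}$ of \eqref{eq:DDE}, the function
\[
	Z(t)=x(t)+\sum_{k=1}^{M}\kappa_k\int_{t-\tau_k}^{t}\gamma^{y_k}(x(s))\dd{s}\,y_k .
\]
The substitution $s\mapsto t+s$ in the definition of $c_v$ shows $c_v(x_t^{\psi})=v\cdot Z(t)$ for every $v\in\mathbb{R}^N$, and in particular $c_v(\psi)=v\cdot Z(0)$. Differentiating $Z$ and substituting \eqref{eq:DDE} for $\dot x$, the $\gamma^{y_k}(x(t))$-terms cancel and one is left with
\[
	\dot Z(t)=\dot x(t)+\sum_{k=1}^{M}\kappa_k\big(\gamma^{y_k}(x(t))-\gamma^{y_k}(x(t-\tau_k))\big)y_k=\sum_{k=1}^{M}\kappa_k\gamma^{y_k}(x(t-\tau_k))\,(y_{k'}-y_k)\in\mathcal{S}.
\]
Hence, for every $v\in\mathcal{S}^\perp$, the scalar map $t\mapsto v\cdot Z(t)$ is continuous on $[0,\infty)$ with derivative $v\cdot\dot Z(t)=0$ on $(0,\infty)$, so it is constant; therefore $c_v(x_t^{\psi})=v\cdot Z(t)=v\cdot Z(0)=c_v(\psi)=c_v(\theta)$ for all $t\ge0$ and all $v\in\mathcal{S}^\perp$. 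Together with the fact that the solution remains in $\mathcal{C}_+$, this says exactly $x_t^{\psi}\in\mathcal{D}_\theta$.

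What still has to be supplied is the positivity input $x^{\psi}(t)\in\mathbb{R}_+^N$ for $\psi\in\mathcal{C}_+$ and $t\ge0$. I would use the standard kinetic argument: if $x_i(t)=0$, then every outflow term $\kappa_k\gamma^{y_k}(x(t))[y_k]_i$ vanishes — because the factor $\gamma_i(0)=0$ appears when $[y_k]_i>0$, and trivially otherwise — so $\dot x_i(t)=\sum_k\kappa_k\gamma^{y_k}(x(t-\tau_k))[y_{k'}]_i\ge0$ there; combined with boundedness of $x$ on its compatibility class this prevents the trajectory from leaving $\mathbb{R}_+^N$ or blowing up in finite time. (This positivity statement is already available in \cite{Liptak2018,Komatsu2020} for the same class $\mathcal{D}_\theta$, so it may simply be quoted.) I expect this positivity/global-existence step, rather than the algebra, to be the main obstacle: the computation that $\dot Z$ is $\mathcal{S}$-valued, and the fact that $\mathcal{S}$, being a linear subspace, is closed under the Newton--Leibniz integration that upgrades $\dot Z\in\mathcal{S}$ to $Z(t)-Z(0)\in\mathcal{S}$, are short; the one point requiring care is the change of variables identifying $c_v(x_t^{\psi})$ with $v\cdot Z(t)$.
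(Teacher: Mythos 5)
Your proposal is correct and follows essentially the same route as the paper: closedness from continuity of the $c_v$, and invariance by differentiating $t\mapsto c_v(x_t^{\psi})$ (your $v\cdot Z(t)$) along solutions, observing that the $\gamma^{y_k}(x(t))$-terms cancel and the derivative is $v\cdot\sum_k\kappa_k\gamma^{y_k}(x(t-\tau_k))(y_{k'}-y_k)=0$ for $v\in\mathcal{S}^{\perp}$. The extra care you take with the change of variables, the finite basis of $\mathcal{S}^{\perp}$, and the positivity of solutions (which the paper leaves implicit, deferring to \cite{Liptak2018,Komatsu2020}) only adds detail to the same argument.
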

\begin{proof}
	The closedness follows from the continuity of $c_v$. We will show that for each $v\in\mathcal{S}^{\perp}$ the functional $c_v$ is constant along the trajectories of \eqref{eq:DDE}. To see this, let us assume that $x$ is a solution of \eqref{eq:DDE}. Then for $t\ge0$ we have that
	\begin{equation}
		\begin{aligned}
			\dv{}{t}c_v(x_t)&=v\cdot\qty\Bigg(\dot x(t)+\sum_{k=1}^M\kappa_k\qty\Big(\gamma^{y_k}\qty\big(x(t))-\gamma^{y_k}\qty\big(x(t-\tau_k)))y_k)\\
			&=v\cdot\qty\Bigg(\sum_{k=1}^M\kappa_k\gamma^{y_k}\qty\big(x(t-\tau_k))(y_{k'}-y_k))=\sum_{k=1}^M\kappa_k\gamma^{y_k}\qty\big(x(t-\tau_k))v\cdot(y_{k'}-y_k)=0,
		\end{aligned}
	\end{equation}
	where the last equality follows from the fact that $v\in\mathcal{S}^{\perp}$. Thus, if $\psi\in\mathcal{D}_{\theta}$, then for every $v\in\mathcal{S}^{\perp}$ and $t\ge0$ the equalities
	\begin{equation}
		c_v\qty\big(x_t^{\psi})=c_v\qty\big(x_0^{\psi})=c_v(\psi)=c_v(\theta)
	\end{equation}
	hold, showing that $x_t^{\psi}\in\mathcal{D}_{\theta}$ as desired.
\end{proof}

A positive vector $\overline x\in\mathbb{R}_+^N$ is called a positive equilibrium of \eqref{eq:DDE} if $x(t)\equiv\overline x$ is a solution of \eqref{eq:DDE}; that is, the equilibria of \eqref{eq:DDE} and \eqref{eq:ODE} coincide. The following proposition is the generalization of Proposition \ref{prop:ode:one} for complex balanced systems.

\begin{prop}\label{prop:dde:one}
	Assume that the kinetic system \eqref{eq:DDE} is complex balanced. Then, for every $\theta\in\mathcal{C}_+$ the corresponding delayed positive stoichiometric compatibility class $\mathcal{D}_{\theta}$ of the system \eqref{eq:DDE} contains precisely one positive equilibrium.
\end{prop}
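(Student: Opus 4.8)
The plan is to mirror the proof of Proposition~\ref{prop:ode:one} but carried out at the level of constant functions in $\mathcal{C}_+$, handling the extra delay-dependent terms in the characterization \eqref{eq:Dt_char} of $\mathcal{D}_{\theta}$. Since the system is complex balanced, by the earlier propositions it is quasi-thermostatic with respect to some positive equilibrium $\overline x$, so the set of positive equilibria of \eqref{eq:DDE} equals $\mathcal{E}=\{\tilde x\in\mathbb{R}_+^N\mid\rho(\tilde x)-\rho(\overline x)\in\mathcal{S}^{\perp}\}$. A positive equilibrium $\tilde x$, viewed as the constant function $\psi\equiv\tilde x$, lies in $\mathcal{D}_{\theta}$ precisely when condition \eqref{eq:Dt_char} holds; evaluating the integrals at constant arguments, this becomes
\begin{equation}
	\tilde x-\theta(0)+\sum_{k=1}^M\kappa_k\tau_k\,\gamma^{y_k}(\tilde x)\,y_k-\sum_{k=1}^M\kappa_k\int_{-\tau_k}^0\gamma^{y_k}\qty\big(\theta(s))\dd{s}\,y_k\in\mathcal{S}.
\end{equation}
So the task reduces to showing that $\mathcal{E}$ meets the affine subspace $p+\mathcal{S}$ in exactly one point, where $p\in\mathbb{R}^N$ is the (fixed, $\theta$-dependent) vector $p=\theta(0)+\sum_k\kappa_k\int_{-\tau_k}^0\gamma^{y_k}(\theta(s))\dd{s}\,y_k$, and where we additionally must ensure the intersection point has \emph{positive} coordinates.

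For existence I would follow the nondelayed argument: starting from $\overline x\in\mathcal{E}$, invoke \cite[Proposition B.1]{Feinberg1995} to produce $\mu\in\mathcal{S}^{\perp}$ with $\gamma(\overline x)e^{\mu}-p\in\mathcal{S}$, and define $\tilde x$ by $\gamma(\tilde x)=\gamma(\overline x)e^{\mu}$; then $\rho(\tilde x)-\rho(\overline x)=\mu\in\mathcal{S}^{\perp}$, so $\tilde x\in\mathcal{E}$, and by construction $\tilde x\in\mathcal{D}_{\theta}$. The point where care is needed — and where assumption \eqref{eq:gamma:prop} enters, as flagged in the Remark after Proposition~\ref{prop:ode:one} — is that the cited result gives a vector $w\in(0,\sigma)$ with $w e^{\mu}-p\in\mathcal{S}$, and one must know this $w$ is actually in the range of $\gamma$ so that $\gamma(\tilde x)=w$ has a solution with $\tilde x\in\mathbb{R}_+^N$; the coercivity condition \eqref{eq:gamma:prop} is exactly what guarantees that the relevant strictly convex function (whose gradient is $\gamma^{-1}(e^{\cdot})$ composed appropriately) attains its minimum in the interior, so $w$ lands in $(0,\sigma)^N=\gamma(\mathbb{R}_+^N)$ coordinatewise. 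I would cite \cite[Lemma IV.1]{Sontag2001} for this.

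For uniqueness the argument is identical to the nondelayed case and uses nothing about the delays: if $\tilde x,\hat x$ are two positive equilibria in $\mathcal{D}_{\theta}$, then from \eqref{eq:Dt_char} applied to both and subtracted, together with $\gamma^{y_k}(\tilde x)$ and $\gamma^{y_k}(\hat x)$ being constants, one gets $\tilde x-\hat x\in\mathcal{S}$ (the delay integral terms telescope because both are constant functions), while $\tilde x,\hat x\in\mathcal{E}$ gives $\rho(\tilde x)-\rho(\hat x)\in\mathcal{S}^{\perp}$; hence
\begin{equation}
	0=\qty\big(\rho(\tilde x)-\rho(\hat x))\cdot(\tilde x-\hat x)=\sum_{i=1}^N\qty\big(\log\gamma_i(\tilde x_i)-\log\gamma_i(\hat x_i))(\tilde x_i-\hat x_i),
\end{equation}
and strict monotonicity of each $\gamma_i$ and of $\log$ forces $\tilde x=\hat x$. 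The main obstacle is therefore not the algebra but making the existence step fully rigorous: one must verify that the vector $p$ built from $\theta$ plays the same role as the class-label $p$ in the nondelayed setting, and that the convex-duality/coercivity machinery of \cite{Feinberg1995,Sontag2001} still delivers a solution with strictly positive coordinates under the generalized kinetics \eqref{eq:gamma:prop} rather than the onto assumption used classically.
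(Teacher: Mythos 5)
Your reduction of the existence step contains a genuine gap. After correctly writing the membership condition for a constant function $\tilde x$ in $\mathcal{D}_{\theta}$, namely
\[
\tilde x+\sum_{k=1}^M\kappa_k\tau_k\,\gamma^{y_k}(\tilde x)\,y_k-p\in\mathcal{S},
\]
you claim the task reduces to intersecting $\mathcal{E}$ with the affine subspace $p+\mathcal{S}$. It does not: the term $\sum_k\kappa_k\tau_k\gamma^{y_k}(\tilde x)y_k$ depends on the unknown $\tilde x$ itself, so the set of constant functions in $\mathcal{D}_{\theta}$ is a nonlinear manifold, not a translate of $\mathcal{S}$. Consequently \cite[Proposition B.1]{Feinberg1995} cannot be invoked as in Proposition \ref{prop:ode:one}: it would deliver $\mu\in\mathcal{S}^{\perp}$ with $\gamma(\overline x)e^{\mu}-p\in\mathcal{S}$, i.e.\ $\gamma(\tilde x)\in p+\mathcal{S}$, which is neither $\tilde x\in p+\mathcal{S}$ nor the condition displayed above, so ``by construction $\tilde x\in\mathcal{D}_{\theta}$'' does not follow. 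The paper's proof repairs exactly this point, following \cite[Theorem 4.4]{Komatsu2020}: it minimizes over $\mathcal{S}^{\perp}$ the strictly convex coercive function
\[
g(x)=\sum_{i=1}^N\left(\int_0^{x_i}\gamma_i^{-1}\left(\gamma_i(\overline x_i)e^{s}\right)\mathrm{d}s+\overline x_i-b_ix_i\right)+\sum_{k=1}^M\kappa_k\tau_k\left(\gamma(\overline x)e^x\right)^{y_k},
\]
whose last term is included precisely so that $\nabla g$ reproduces the delay correction $\sum_k\kappa_k\tau_k(\gamma(\overline x)e^x)^{y_k}y_k$; coercivity (via \eqref{eq:gamma:prop}) yields a minimizer $\mu\in\mathcal{S}^{\perp}$, and the first-order condition $\nabla g(\mu)\in\mathcal{S}$ is exactly the required membership for $\tilde x=\gamma^{-1}(\gamma(\overline x)e^{\mu})$. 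Your sketch of ``the strictly convex function whose gradient is $\gamma^{-1}(e^{\cdot})$ composed appropriately'' omits this delay term, so its critical point would land in the nondelayed class through $p$ rather than in $\mathcal{D}_{\theta}$.

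The uniqueness step has a smaller but still real error: for two constant equilibria $\tilde x,\hat x\in\mathcal{D}_{\theta}$ the delay integrals do \emph{not} telescope, since $\gamma^{y_k}(\tilde x)\neq\gamma^{y_k}(\hat x)$ in general; what \eqref{eq:Dt_char} actually gives is $\tilde x-\hat x+\sum_k\kappa_k\tau_k(\gamma^{y_k}(\tilde x)-\gamma^{y_k}(\hat x))y_k\in\mathcal{S}$, not $\tilde x-\hat x\in\mathcal{S}$, so your displayed identity is not justified as written. The conclusion is nevertheless salvageable, and this is what the paper does: pairing the full expression with $\rho(\tilde x)-\rho(\hat x)\in\mathcal{S}^{\perp}$ produces the sum of the terms $(\log\gamma_i(\tilde x_i)-\log\gamma_i(\hat x_i))(\tilde x_i-\hat x_i)$ \emph{plus} the additional nonnegative terms $\kappa_k\tau_k(\log\gamma^{y_k}(\tilde x)-\log\gamma^{y_k}(\hat x))(\gamma^{y_k}(\tilde x)-\gamma^{y_k}(\hat x))$, and monotonicity forces every summand to vanish, whence $\tilde x=\hat x$.
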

\begin{proof}
	While in the nondelayed case (see Proposition \ref{prop:ode:one}) existence is shown via \cite[Proposition B.1]{Feinberg1995} without modification, in the delayed case we need to adapt certain steps of the proof based on \cite[Theorem 4.4]{Komatsu2020}.

	Let us for $\overline x\in\mathcal{E}$ define the positive vector $b\in\mathbb{R}_+^N$ by
	\begin{equation}
		b_i=\theta_i(0)+\sum_{k=1}^M\kappa_k\int_{-\tau_k}^0\gamma^{y_k}\qty\big(\theta(s))\dd{s}
	\end{equation}
	and the continuously differentiable function $g:\mathbb{R}^N\mapsto\mathbb{R}$ by
	\begin{equation}
		g(x)=\sum_{i=1}^N\qty\Bigg(\int_0^{x_i}\gamma_i^{-1}\qty\big(\gamma_i(\overline x_i)e^{s})\dd{s}+\overline x_i-b_ix_i)+\sum_{k=1}^M\kappa_k\tau_k\qty\big(\gamma(\overline x)e^x)^{y_k}.
	\end{equation}
	We note that adding $\overline x_i$ to the integral is not necessary for the following analysis, but then $g(x)$ reduces precisely to the analogous function in the known proof of this theorem for mass action systems.

	The gradient of $g$ is given by
	\begin{equation}
		\nabla g(x)=\gamma^{-1}\qty\Big(\gamma(\overline x)e^x)-b+\sum_{k=1}^M\kappa_k\tau_k\qty\big(\gamma(\overline x)e^x)^{y_k}y_k		
	\end{equation}
	and that the Hessian of $g$ is written as
	\begin{equation}
		H_g(x)=\mathrm{diag}\qty\Bigg(\frac{\gamma(\overline x)e^x}{\gamma'\qty\Big(\gamma^{-1}\qty\big(\gamma(\overline x)e^x))})+\sum_{k=1}^M\kappa_k\tau_k\qty\big(\gamma(\overline x)e^x)^{y_k}y_ky_k^{\top},
	\end{equation}
	where the fraction in the diagonal matrix is defined element-wise. The corresponding quadratic form is positive-definite as the first term is a diagonal matrix with positive entries, and thus is positive-definite, and the second term consists of positive factors and the positive-semidefinite matrix $y_ky_k^{\top}$. Then the function $g$ is strictly convex everywhere.

	From the property \eqref{eq:gamma:prop} of the $\gamma_i$ functions it follows that for any nonzero vector $x\in\mathbb{R}^N$ we have that
	\begin{equation}
		\lim_{a\rightarrow\infty}\qty\Bigg(\int_0^{x_i}\gamma_i^{-1}\qty\big(\gamma_i(\overline x_i)e^{as})\dd{s}+\overline x_i-ab_ix_i)=\begin{cases}\infty,\qquad&x_i\neq0,\\\overline x_i\qquad&x_i=0,\end{cases}
	\end{equation}
	and thus
	\begin{equation}\label{eq:gax}
		\lim_{a\rightarrow\infty}\sum_{i=1}^N\qty\Bigg(\int_0^{x_i}\gamma_i^{-1}\qty\big(\gamma_i(\overline x_i)e^{as})\dd{s}+\overline x_i-ab_ix_i)\le\lim_{a\rightarrow\infty}g(ax)=\infty.
	\end{equation}

	Let $\overline g:\mathcal{S}^{\perp}\mapsto\mathbb{R}$ be the restriction of $g$ to $\mathcal{S}^{\perp}$, which is also continuously differentiable and strictly convex. Define the subset
	\begin{equation}
		\mathcal{S}^{\perp}\supset G=\qty\big{x\in\mathcal{S}^{\perp}\big|\overline g(x)\le g(0)}.
	\end{equation}
	Clearly $G$ is convex, closed in $\mathbb{R}^N$, contains the $0$ zero vector and contains no half line with endpoint $0$ because of \eqref{eq:gax}. Then $G$ is bounded, and thus compact as well, since in a finite-dimensional vector space every unbounded closed convex set containing $0$ must contain a half line with endpoint $0$ \cite[Theorem 3.5.1]{Stoer1970}. The continuity of $\overline g$ and the compactness of $G$ implies that there exists $\mu\in G$ such that
	\begin{equation}
		\overline g(\mu)\le\overline g(x),\qquad\forall x\in G.
	\end{equation}
	In fact, $\overline g(0)<\overline g(x)$ for $x\in\mathcal{S}^{\perp}\backslash G$, and thus
	\begin{equation}
		\overline g(\mu)\le\overline g(x),\qquad\forall x\in\mathcal{S}^{\perp}.
	\end{equation}
	Then for $\xi\in\mathcal{S}^{\perp}$, the equality
	\begin{equation}
		0=\dv{}{t}\overline g(\mu+t\xi)\bigg|_{t=0}=\dv{}{t}g(\mu+t\xi)\bigg|_{t=0}=\nabla g(\mu)\cdot\xi
	\end{equation}
	holds; that is, the vector $\nabla g(\mu)$ is in $\mathcal{S}$, and thus
	\begin{equation}
		\begin{aligned}
			&\gamma^{-1}\qty\big(\gamma(\overline x)e^{\mu})-b+\sum_{k=1}^M\kappa_k\tau_k\qty\big(\gamma(\overline x)e^{\mu})^{y_k}y_k\\
			&=\gamma^{-1}\qty\big(\gamma(\overline x)e^{\mu})-\theta(0)+\sum_{k=1}^M\qty\Bigg(\kappa_k\int_{-\tau_k}^0\qty\Big(\qty\big(\gamma(\overline x)e^{\mu})^{y_k}-\gamma^{y_k}\qty\big(\theta(s)))\dd{s})y_k\in\mathcal{S}
		\end{aligned}
	\end{equation}

	Let $\tilde x$ be defined by
	\begin{equation}
		\tilde x=\gamma^{-1}\qty\big(\gamma\qty(\overline x)e^{\mu}).
	\end{equation}
	Then $\tilde x\in\mathcal{D}_{\theta}$ and taking logarithm shows that
	\begin{equation}
		\rho(\tilde x)-\rho(\overline x)=\mu\in\mathcal{S}^{\perp};
	\end{equation}
	that is, we have that $\tilde x\in\mathcal{E}$ as well.

	To show uniqueness, assume by contradiction that $\tilde x$ and $\overline x$ are distinct positive equilibria in $\mathcal{D}_{\theta}$. Then by \eqref{eq:Dt_char} it follows that
	\begin{equation}
		\tilde x-\overline x+\sum_{k=1}^M\qty\Bigg(\kappa_k\int_{-\tau_k}^0\qty\big(\gamma^{y_k}(\tilde x)-\gamma^{y_k}(\overline x))\dd{s})y_k\in\mathcal{S}.
	\end{equation}
	This, combined with the characterization \eqref{eq:E_char} shows that
	\begin{equation}
		\begin{aligned}
			0&=\qty\big(\rho(\tilde x)-\rho(\overline x))\cdot\qty\Bigg[\tilde x-\overline x+\sum_{k=1}^M\qty\Bigg(\kappa_k\int_{-\tau_k}^0\qty\big(\gamma^{y_k}(\tilde x)-\gamma^{y_k}(\overline x))\dd{s})y_k]\\
			&=\sum_{i=1}^N\qty\big(\log\gamma(\tilde x_i)-\log\gamma(\overline x_i))(\tilde x_i-\overline x_i)\\
			&+\sum_{k=1}^M\qty\Bigg(\kappa_k\tau_k\qty\big(\log\gamma^{y_k}(\tilde x)-\log\gamma^{y_k}(\overline x))\qty\big(\gamma^{y_k}(\tilde x)-\gamma^{y_k}(\overline x))).
		\end{aligned}
	\end{equation}
	Since the functions $\gamma_i$ and the logarithm are strictly increasing, the above expression is zero if and only if $\tilde x=\overline x$.
\end{proof}

The following theorem and the underlying Lyapunov-Krasovskii functional is the main contribution of the paper.

\begin{thm}\label{thm:Lyapunov}
	Assume that the kinetic system \eqref{eq:DDE} is complex balanced. Then, every positive equilibrium of the system is locally asymptotically stable relative to its positive stoichiometric compatibility class.
\end{thm}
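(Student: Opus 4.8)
The plan is to construct a Lyapunov--Krasovskii functional that augments the nondelayed logarithmic Lyapunov function \eqref{eq:gen:Lyapunov} with a distributed term accounting for the delayed production of complexes, to derive $\dot W\le0$ from it, and then to upgrade Lyapunov stability to asymptotic stability by a LaSalle-type invariance argument. Fix a positive equilibrium $\overline x$ of \eqref{eq:DDE}; as shown above, every positive equilibrium of a complex balanced network is itself complex balanced, and by Proposition \ref{prop:dde:one} $\overline x$ is the only positive equilibrium in its positive stoichiometric compatibility class $\mathcal{D}_\theta$. Writing $q_k(x)=(\rho(x)-\rho(\overline x))\cdot y_k$, so that $\gamma^{y_k}(x)=\gamma^{y_k}(\overline x)e^{q_k(x)}$, I would take
\[
  W(\psi)=V(\psi(0),\overline x)+\sum_{k=1}^{M}\kappa_k\int_{-\tau_k}^{0}\left(\gamma^{y_k}(\psi(s))\log\frac{\gamma^{y_k}(\psi(s))}{\gamma^{y_k}(\overline x)}-\gamma^{y_k}(\psi(s))+\gamma^{y_k}(\overline x)\right)\dd{s},
\]
which reduces to the known mass-action functional of \cite{Liptak2018} when $\gamma$ is the identity. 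Since $t\log(t/s)-t+s\ge0$ for $s,t>0$, each integrand is nonnegative, hence $W(\psi)\ge V(\psi(0),\overline x)\ge0$; since $V(\cdot,\overline x)$ is continuous and vanishes only at $\overline x$, $W$ is bounded below near $\overline x$ by a positive-definite function of $|\psi(0)-\overline x|$, and since all ingredients of $W$ are continuous and vanish at $\overline x$, $W$ is decrescent there. Together with $\dot W\le0$ (established next), this makes $W$ a Lyapunov--Krasovskii functional, so $\overline x$ is uniformly stable relative to $\mathcal{D}_\theta$.

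To see $\dot W\le0$, differentiate along a solution of \eqref{eq:DDE}. Using $\nabla V(\cdot,\overline x)=\rho(\cdot)-\rho(\overline x)$,
\[
  \dv{}{t}V(x(t),\overline x)=\sum_{k=1}^{M}\kappa_k\gamma^{y_k}(\overline x)\left(e^{q_k(x(t-\tau_k))}q_{k'}(x(t))-e^{q_k(x(t))}q_k(x(t))\right);
\]
bounding the first summand with inequality \eqref{eq:exp_ab} at $a=q_k(x(t-\tau_k))$, $b=q_{k'}(x(t))$, and adding the (explicit) derivative $\sum_k\kappa_k\gamma^{y_k}(\overline x)\bigl[e^{q_k(x(t))}(q_k(x(t))-1)-e^{q_k(x(t-\tau_k))}(q_k(x(t-\tau_k))-1)\bigr]$ of the distributed term, all $q_k$-dependent contributions cancel and only $\sum_{k=1}^{M}\kappa_k\gamma^{y_k}(\overline x)\bigl(e^{q_{k'}(x(t))}-e^{q_k(x(t))}\bigr)$ survives --- and this vanishes after regrouping by complexes precisely because $\overline x$ is complex balanced. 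Hence $\dot W(x_t)\le0$; and since the only inequality used is \eqref{eq:exp_ab}, $\dot W(x_t)=0$ holds exactly when $q_k(x(t-\tau_k))=q_{k'}(x(t))$ for every reaction $k$.

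For attractivity, note that by the stability just obtained a solution starting close enough to $\overline x$ remains in a compact subset of $\mathbb{R}_+^N$ with all components bounded away from zero; as $\dot x$ is then bounded as well, the orbit $\{x_t^{\psi}:t\ge0\}$ is precompact in $\mathcal{C}$, so the LaSalle--Krasovskii invariance principle for retarded functional differential equations applies: the $\omega$-limit set lies in the largest invariant set $\mathcal{M}$ on which $\dot W\equiv0$, i.e.\ on which $q_k(x(t-\tau_k))=q_{k'}(x(t))$ for all $t\in\mathbb{R}$ and all $k$. On $\mathcal{M}$ I would substitute $\gamma^{y_k}(x(t-\tau_k))=\gamma^{y_k}(\overline x)e^{q_{k'}(x(t))}$ back into \eqref{eq:DDE} and regroup by complexes: the coefficient of each complex $\eta$ becomes $e^{q_\eta(x(t))}$ times the complex-balance defect at $\eta$, which is zero, so $\dot x\equiv0$ on $\mathcal{M}$. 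Thus $\mathcal{M}$ consists of constant positive equilibria lying in $\mathcal{D}_\theta$, whence $\mathcal{M}=\{\overline x\}$ by Proposition \ref{prop:dde:one}; every nearby solution therefore converges to $\overline x$, which together with stability is local asymptotic stability relative to $\mathcal{D}_\theta$.

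The step I expect to be the main obstacle is this passage from $\dot W\le0$ to convergence: $W$ is \emph{not} strictly decreasing --- its derivative vanishes on a large set --- so one genuinely needs the infinite-dimensional invariance principle and hence precompactness of the orbit, which is why Lyapunov stability has to be secured first and the solution kept in the interior of $\overline{\mathbb{R}}_+^N$, where $\log\gamma_i$ is well-behaved. The remaining care goes into checking that $W$ is a bona fide Lyapunov--Krasovskii functional (continuity and the lower-/upper-bound estimates, using the strict monotonicity of the $\gamma_i$ as in Proposition \ref{prop:dde:one}) and, above all, into the identity $\dot x\equiv0$ on $\mathcal{M}$, which is where complex balancing enters a second time and which is what makes the characterisation of $\mathcal{M}$ clean.
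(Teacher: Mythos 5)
Your proposal is correct and takes essentially the same route as the paper: your functional $W$ coincides, after expanding the logarithm, with the paper's \eqref{eq:V}, the derivative computation and the use of inequality \eqref{eq:exp_ab} together with complex balancing are identical, and the characterisation $\dot W=0\iff q_{k'}(t)=q_k(t-\tau_k)$ matches. The only difference is that you spell out the LaSalle/precompactness step and the identification of the largest invariant set, which the paper delegates to \cite{Liptak2018} and to the remark following the theorem.
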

\begin{proof}
	Consider the candidate Lyapunov--Krasovskii functional $V:\mathcal{C}_+\mapsto\overline{\mathbb{R}}_+$ defined for $\psi\in\mathcal{C}_+$ by
	\begin{equation}\label{eq:V}
		\begin{aligned}
			V(\psi)&:=V(\psi,\overline x)=\sum_{i=1}^N\int_{\overline x_i}^{\psi_i(0)}\qty\big(\log\gamma_i(s)-\log\gamma_i(\overline x_i))\dd{s}\\
			&+\sum_{k=1}^M\kappa_k\int_{-\tau_k}^0\qty\bigg(\gamma^{y_k}\qty\big(\psi(s))\qty\Big(\log\gamma^{y_k}\qty\big(\psi(s))-\log\gamma^{y_k}(\overline x)-1)+\gamma^{y_k}(\overline x))\dd{s}.
		\end{aligned}
	\end{equation}
	Using \eqref{eq:exp_ab} shows that the second term of \eqref{eq:V} is nonnegative and zero if only if $x=\overline x$, while in \cite{Sontag2001} the author shows the same for the first term. The gradient of the first term of \eqref{eq:V} is $\rho(x)-\rho(\overline x)$, and thus the Lyapunov-Krasovskii directional derivative along trajectories of \eqref{eq:DDE} is given by
	\begin{equation}
		\begin{aligned}
			\dot V(x_t)&=\sum_{k=1}^M\kappa_k\qty\Big(\gamma^{y_k}\qty\big(x(t-\tau_k))q_{k'}(t)-\gamma^{y_k}\qty\big(x(t))q_k(t))\\
			&+\sum_{k=1}^M\kappa_k\qty\Big(\gamma^{y_k}\qty\big(x(t))q_k(t)-\gamma^{y_k}\qty\big(x(t-\tau_k))q_k(t-\tau_k))\\
			&+\sum_{k=1}^M\kappa_k\qty\Big(\gamma^{y_k}\qty\big(x(t-\tau_k))-\gamma^{y_k}\qty\big(x(t)))\\
			&=\sum_{k=1}^M\kappa_k\qty\Big(\gamma^{y_k}\qty\big(x(t-\tau_k))\qty\big(q_{k'}(t)-q_k(t-\tau_k))+\gamma^{y_k}\qty\big(x(t-\tau_k))-\gamma^{y_k}\qty\big(x(t))).
		\end{aligned}
	\end{equation}
	Rewrite the above as
	\begin{equation}
		\dot V(x_t)=\sum_{k=1}^M\kappa_k\gamma^{y_k}(\overline x)\qty\Big(e^{q_k(t-\tau_k)}\qty\big(q_{k'}(t)-q_k(t-\tau_k))+e^{q_k(t-\tau_k)}-e^{q_k(t)})
	\end{equation}
	and use inequality \eqref{eq:exp_ab} to find that
	\begin{equation}
		\begin{aligned}
			\dot V(x_t)&\le\sum_{k=1}^M\kappa_k\gamma^{y_k}(\overline x)\qty\big(e^{q_{k'}(t)}-e^{q_k(t)})\\
			&=\sum_{\eta\in\mathcal{K}}e^{q_{\eta}(t)}\qty\Bigg(\sum_{k:\eta=y_{k'}}\kappa_k\gamma^{y_k}(\overline x)-\sum_{k:\eta=y_k}\kappa_k\gamma^{y_k}(\overline x))=0,
		\end{aligned}
	\end{equation}
	as the system is complex balanced, and $\dot V(x_t)=0$ if and only if the equality
	\begin{equation}
		q_{k'}(t)=q_k(t-\tau_k)
	\end{equation}
	holds for each reaction $k=1,2,\dots,M$. Standard arguments (see \cite[Theorem 3]{Liptak2018}) show that the largest invariant subset of the set
	\begin{equation}
		\mathcal{R}=\qty\Big{\psi\in\mathcal{C}_+\Big|\dot V(\psi)=0}=\qty\Big{\psi\in\mathcal{C}_+\Big|q_{k'}(t)=q_k(t-\tau_k)\text{ for }k=1,2,\dots,M}
	\end{equation}
	consists of constant functions that are positive complex balanced equilibria, and the proof is finished.
\end{proof}
We note that similarly to the nondelayed case, the quasi-thermostatic and quasi-thermodynamic properties could be defined in order to generalize these stability notions to not just complex balanced systems. However, the physical interpretation of the quasi-thermodynamic condition of $\dot V(x_t)\le 0$, where $V$ is given in \eqref{eq:V}, is not straightforward. Furthermore, the functional $V$ is not universal in the sense of \cite{gorban2019universal}, since it depends on the stoichiometric vectors and the rate coefficients. 
\begin{rem}
	It can be similarly shown that the largest invariant subset $\mathcal{M}$ of $\mathcal{R}\cap\mathcal{B}_{\epsilon}(\overline x)$, where $0<\epsilon<\min_{i=1,2,\dots,N}\overline x_i$, also consists of constant functions that are positive complex balanced equilibria. Since $\overline x$ is Lyapunov stable there exists $\delta>0$ such that $\theta\in\mathcal{B}_{\delta}(\overline x)$ implies that the corresponding solution $x_t^{\theta}\in\mathcal{B}_{\epsilon}(\overline x)$ for $t\ge0$. Applying LaSalle's invariance principle \cite{Smith2011} to $\mathcal{M}$ shows that for $\theta\in\mathcal{B}_{\delta}(\overline x)$ we have that $\omega(\theta)\subset\mathcal{M}\cap\mathcal{D}_{\theta}$, where $\omega(\theta)=\qty\big{\psi\in\overline{\mathcal{C}}_+\big|\text{there exists }t_n\rightarrow\infty\text{ such that }x_{t_n}^{\theta}\rightarrow\psi}$ is the omega limit set of $\theta$. But the elements of $\mathcal{M}$ are positive complex balanced equilibria and Proposition \ref{prop:dde:one} shows that $\mathcal{D}_{\theta}$ can contain at most one positive equilibrium; that is, we have that $\omega(\theta)$ consists of a single positive complex balanced equilibrium, which is also Lyapunov stable by Theorem \ref{thm:Lyapunov}. That is, the positive complex balanced equilibria of \eqref{eq:DDE} are semistable in the sense that they are not only Lyapunov stable but there exists $\delta>0$ such that for $\theta\in\mathcal{B}_{\delta}(\overline x)$ the corresponding solution $x^{\theta}(t)$ converges to a Lyapunov stable equilibrium.
\end{rem}

\begin{rem}
	Using the notations of \eqref{eq:ODE_matrix} system \eqref{eq:DDE} can be rewritten as
	\begin{equation}
		\dot x(t)=\sum_{i=1}^L\sum_{j=1}^L\kappa_{ij}\qty\big[\gamma^{y_i}\qty\big(x(t-\tau_{ij}))y_j-\gamma^{y_i}\qty\big(x(t))y_i].
	\end{equation}
	Then the Lyapunov-Krasovskii functional takes the form
	\begin{equation}
		\begin{aligned}
			V(\psi)&:=V(\psi,\overline x)=\sum_{i=1}^{N}\int_{\overline x_i}^{\psi_i(0)}\qty\big(\log\gamma_i(s)-\log\gamma_i(\overline x_i))\dd{s}\\
			&+\sum_{i=1}^L\sum_{j=1}^L\kappa_{ij}\int_{-\tau_{ij}}^0\qty\bigg(\gamma^{y_i}\qty\big(\psi(s))\qty\Big(\log\gamma^{y_i}\qty\big(\psi(s))-\log\gamma^{y_i}(\overline x)-1)-\gamma^{y_i}(\overline x))\dd{s}.
		\end{aligned}
	\end{equation}
	A calculation similar to the above shows that
	\begin{equation}
		\dot V(x_t)\le\sum_{i=1}^L\sum_{j=1}^L\kappa_{ij}\gamma^{y_i}(\overline x)\qty\big(e^{q_j(t)}-e^{q_i(t)}).
	\end{equation}
	The right-hand size is equal to 
	\begin{equation}
		\sum_{j=1}^Le^{q_j(t)}\qty\Bigg(\sum_{i=1}^L\kappa_{ij}\gamma^{y_i}(\overline x))-\sum_{i=1}^Le^{q_i(t)}\qty\Bigg(\sum_{j=1}^L\kappa_{ij})\gamma^{y_i}(\overline x)=Q(t)\tilde K\Gamma(\overline x).
	\end{equation}
	Since $\overline x$ is a complex balanced equilibrium, the vector $\Gamma(\overline x)$ is in the kernel of $\tilde K$; that is, we have that $\dot V(x_t)\le0$.
\end{rem}

%We note that similarly to the nondelayed case, the quasi-thermostatic and quasi-thermodynamic properties could be defined in order to generalize these stability notions to not just complex balanced systems, but the physical interpretation of the delayed version of the quasi-thermodynamic condition \eqref{eq:gen:Helmholtz} is not straightforward.

\section{Examples}\label{sec:examples}
In the following examples we illustrate our notations and results.

\subsection{Example 1}
First, let us consider the delayed kinetic system from \cite{Liptak2018} with mass action kinetics. The system consists of a reversible reaction
\begin{equation}
	2X_1\xrightleftharpoons[\kappa_2=2,\tau_2=0.5]{\kappa_1=1}X_2.
\end{equation}
The corresponding kinetic system takes the form
\begin{equation}
	\dot x(t)=\kappa_1\qty\Bigg(x_1^2(t)\begin{bmatrix}0\\1\end{bmatrix}-x_1^2(t)\begin{bmatrix}2\\0\end{bmatrix})+\kappa_2\qty\Bigg(x_2(t-\tau_2)\begin{bmatrix}2\\0\end{bmatrix}-x_2(t)\begin{bmatrix}0\\1\end{bmatrix}).
\end{equation}
The stoichiometric subspace and its orthogonal complement is
\begin{equation}
	\mathcal{S}=\mathrm{span}\qty\Bigg{\begin{bmatrix}-1\\2\end{bmatrix}}\qquad\mathcal{S}^{\perp}=\mathrm{span}\qty\Bigg{\begin{bmatrix}2\\1\end{bmatrix}}.
\end{equation}
It is easy to verify that $[2~2]^{\top}$ is a positive complex balanced equilibrium, and thus the positive equilibria are given by
\begin{equation}
	\mathcal{E}=\qty\Bigg{x\in\mathbb{R}_+^2\Bigg|\begin{bmatrix}\log x_1-\log2\\\log x_2-\log2\end{bmatrix}\in\mathcal{S}^{\perp}}.
\end{equation}
For any $\overline x\in\mathcal{E}$ we consider the set of points
\begin{equation}
	\mathcal{X}_{\overline x}=\qty\Bigg{x\in\mathbb{R}_+^2\Bigg|\begin{bmatrix}x_1-\overline x_1\\(1+\kappa_2\tau_2)(x_2-\overline x_2)\end{bmatrix}\in\mathcal{S}}.
\end{equation}
If we construct constant functions in $\mathcal{C}_+$ from $\overline x$ and the elements of $\mathcal{X}_{\overline x}$ in the obvious way, then by \eqref{eq:Dt_char} we have $\mathcal{X}_{\overline x}\in\mathcal{D}_{\overline x}$.

Let us consider the transformations $\gamma_1(s)=\frac{s^2}{1+s}$ and $\gamma_2(s)=\frac{s^3}{1+s}$; that is, the transformed system takes the form
\begin{equation}
	\begin{aligned}
		\dot x(t)&=\kappa_1\qty\Bigg(\frac{x_1^4(t)}{\qty\big(1+x_1(t))^2}\begin{bmatrix}0\\1\end{bmatrix}-\frac{x_1^4(t)}{\qty\big(1+x_1(t))^2}\begin{bmatrix}2\\0\end{bmatrix})\\
			&+\kappa_2\qty\Bigg(\frac{x_2^3(t-\tau_2)}{1+x_2(t-\tau_2)}\begin{bmatrix}2\\0\end{bmatrix}-\frac{x_2^3(t)}{1+x_2(t)}\begin{bmatrix}0\\1\end{bmatrix}).
	\end{aligned}
\end{equation}
Is it easy to verify that $\qty\Big[\frac{\sqrt5}{2}+\frac{1}{2}~~1]^{\top}$ is a positive complex balanced equilibrium, and thus the positive equilibria are given by
\begin{equation}
	\mathcal{E}=\qty\Bigg{x\in\mathbb{R}_+^2\Bigg|\begin{bmatrix}\log\frac{x_1^2}{1+x_1}-\log1\\\log\frac{x_2^3}{1+x_2}-\log\frac{1}{2}\end{bmatrix}\in\mathcal{S}^{\perp}},
\end{equation}
and $\mathcal{X}_{\overline x}$ is given by
\begin{equation}
	\mathcal{X}_{\overline x}=\qty\Bigg{x\in\mathbb{R}_+^2\Bigg|\begin{bmatrix}x_1-\overline x_1\\x_2-\overline x_2+\kappa_2\tau_2\qty\Big(\frac{x_2^3}{1+x_2}-\frac{\overline x_2^3}{1+\overline x_2})\end{bmatrix}\in\mathcal{S}}.
\end{equation}
Using the terminology of \cite{Komatsu2019,Komatsu2020} it is easy to see that the set $W=\{X_1,X_2\}$ is the only minimal semilocking set (called siphon in the theory of Petri nets). The $L_W$ space consists of functions $w\in\overline{\mathcal{C}}_+$ such that
\begin{equation}
	\begin{aligned}
		w_i(s)&=0,\qquad X_i\in W,\\
		w_i(s)&\neq0,\qquad X_i\not\in W
	\end{aligned}
\end{equation}
holds for $s\in[-\tau,0]$. Then \cite[Theorem 5.1]{Komatsu2020} states that the boundary equilibria of the system is contained in
\begin{equation}
	\bigcup_{\theta\in\mathcal{C}_+}\overline{\mathcal{D}}_{\theta}\cap L_W,
\end{equation}
but the above set consists of only the constant zero function; that is, all nontrivial equilibria are positive and globally asymptotically stable w.r.t. their positive stoichiometric compatibility classes.

In Figure \ref{fig:ex1:phase}, the positive equilibria, several positive stoichiometric compatibility classes and trajectories of the original mass action system are depicted with red dashed, green dashed and green continuous lines, respectively. The same objects for the transformed system are drawn with black dashed, blue dashed and blue continuous lines, respectively.

\begin{figure}[H]
	\begin{center}
		\includegraphics[width=0.8\textwidth]{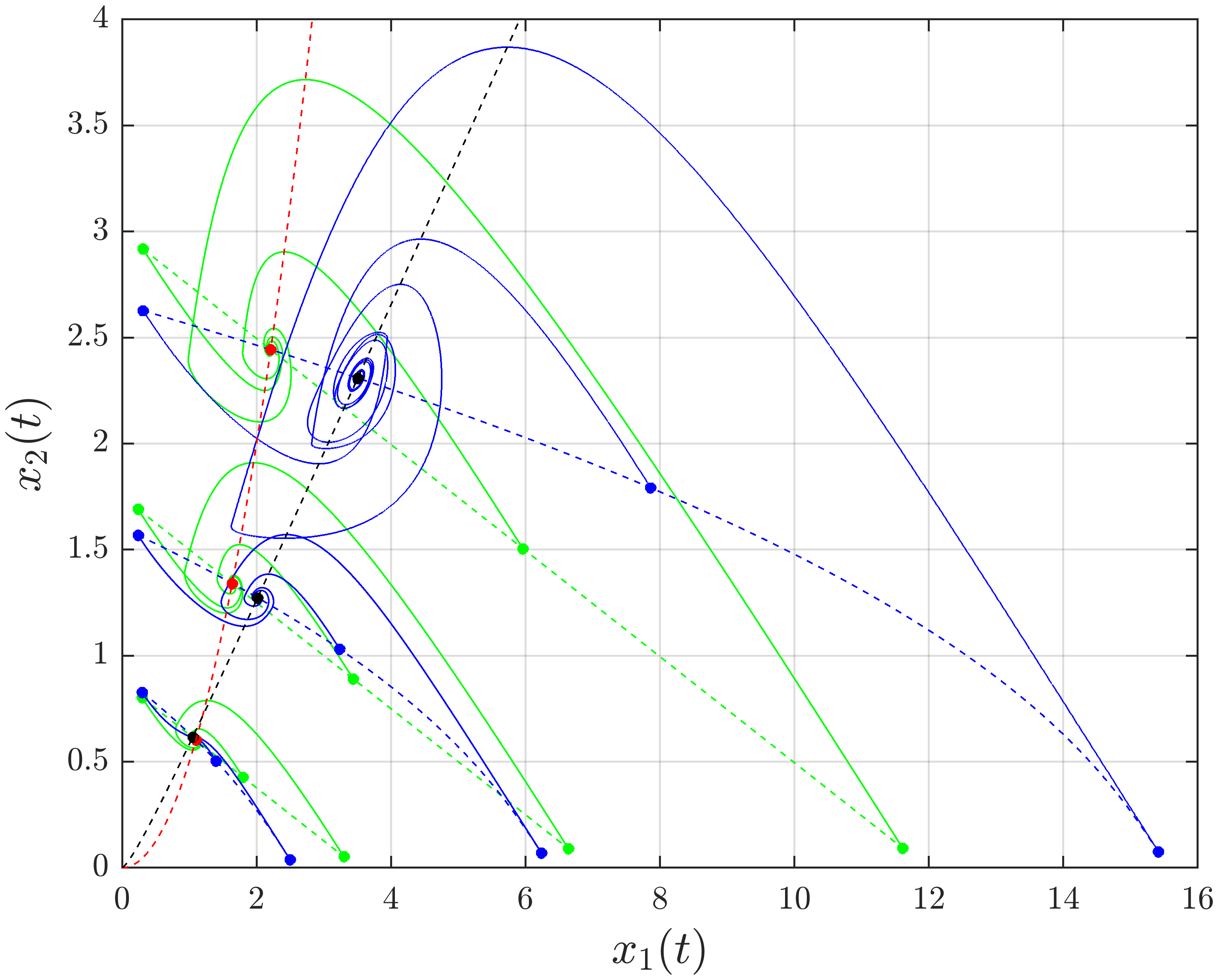}
	\end{center}
	\caption{Phase plot of Example 1}\label{fig:ex1:phase}
\end{figure}

\subsection{Example 2}
Our next example is a delayed version of another complex balanced small reaction network, taken from \cite{Szederkenyi2011}. We consider the set of reversible reactions
\begin{equation}
	3X_1\xrightleftharpoons[\kappa_2=\frac{2.8}{3}]{\kappa_1=\frac{1.4}{3}}3X_2\qquad3X_1\xrightleftharpoons[\kappa_4=0.126,\tau_4=0.4]{\kappa_3=0.1}2X_1+X_2\qquad3X_2\xrightleftharpoons[\kappa_6=0.063,\tau_6=0.6]{\kappa_5=0.1}2X_1+X_2
\end{equation}
with the transformations $\gamma_1(s)=s$ and $\gamma_2(s)=\frac{s^2}{1+s}$. Then the system takes the form
\begin{equation}
	\begin{aligned}
		\dot x(t)&=\kappa_1\qty\Bigg(x_1^3(t)\begin{bmatrix}0\\3\end{bmatrix}-x_1^3(t)\begin{bmatrix}3\\0\end{bmatrix})+\kappa_2\qty\Bigg(\frac{x_2^6(t)}{\qty\big(1+x_2(t))^2}\begin{bmatrix}3\\0\end{bmatrix}-\frac{x_2^6(t)}{\qty\big(1+x_2(t))^2}\begin{bmatrix}0\\3\end{bmatrix})\\
			&+\kappa_3\qty\Bigg(x_1^3(t)\begin{bmatrix}2\\1\end{bmatrix}-x_1^3(t)\begin{bmatrix}3\\0\end{bmatrix})\\
				&+\kappa_4\qty\Bigg(x_1^2(t-\tau_4)\frac{x_2^2(t-\tau_4)}{1+x_2(t-\tau_4)}\begin{bmatrix}3\\0\end{bmatrix}-x_1^2(t)\frac{x_2^2(t)}{1+x_2(t)}\begin{bmatrix}2\\1\end{bmatrix})\\
					&+\kappa_5\qty\Bigg(\frac{x_2^6(t)}{\qty\big(1+x_2(t))^2}\begin{bmatrix}2\\1\end{bmatrix}-\frac{x_2^6(t)}{\qty\big(1+x_2(t))^2}\begin{bmatrix}0\\3\end{bmatrix})\\
						&+\kappa_6\qty\Bigg(x_1^2(t-\tau_6)\frac{x_2^2(t-\tau_6)}{1+x_2(t-\tau_6)}\begin{bmatrix}0\\3\end{bmatrix}-x_1^2(t)\frac{x_2^2(t)}{1+x_2(t)}\begin{bmatrix}2\\1\end{bmatrix}).
	\end{aligned}
\end{equation}
The stoichiometric subspace and its orthogonal complement are
\begin{equation}
	\mathcal{S}=\mathrm{span}\qty\Bigg{\begin{bmatrix}-3\\3\end{bmatrix}}\qquad\mathcal{S}^{\perp}=\mathrm{span}\qty\Bigg{\begin{bmatrix}3\\3\end{bmatrix}}.
\end{equation}
It is easy to verify via the Cardano formula that 
\begin{equation}
	\overline x=\begin{bmatrix}\sqrt[3]2\\\sqrt[3]{\frac{1}{2}+\sqrt{\frac{23}{108}}}+\sqrt[3]{\frac{1}{2}-\sqrt{\frac{23}{108}}}\end{bmatrix}
\end{equation}
is a positive complex balanced equilibrium, and thus the positive equilibria are given by
\begin{equation}
	\mathcal{E}=\qty\Bigg{x\in\mathbb{R}_+^2\Bigg|\begin{bmatrix}\log x_1-\log\overline x_1\\\log\frac{x_2^2}{1+x_2}-\log\frac{\overline x_2^2}{1+\overline x_2}\end{bmatrix}\in\mathcal{S}^{\perp}},
\end{equation}
and $\mathcal{X}_{\overline x}$ is given by
\begin{equation}
	\mathcal{X}_{\overline x}=\qty\Bigg{x\in\mathbb{R}_+^2\Bigg|\begin{bmatrix}x_1-\overline x_1+2(\kappa_4\tau_4+\kappa_5\tau_5)\qty\Big(x_1^2\frac{x_2^2}{1+x_2}-\overline x_1^2\frac{\overline x_2^2}{1+\overline x_2})\\x_2-\overline x_2+(\kappa_4\tau_4+\kappa_5\tau_5)\qty\Big(x_1^2\frac{x_2^2}{1+x_2}-\overline x_1^2\frac{\overline x_2^2}{1+\overline x_2})\end{bmatrix}\in\mathcal{S}}.
\end{equation}
Similarly to the previous example, it can be shown via \cite[Theorem 5.1]{Komatsu2020} that all nontrivial equilibria of the system are positive and globally asymptotically stable w.r.t. their positive stoichiometric compatibility classes.

In Figure \ref{fig:ex2:phase}, the positive equilibria, several positive stoichiometric compatibility classes and trajectories of system are drawn with black dashed, blue dashed and blue continuous lines, respectively.

\begin{figure}[H]
	\begin{center}
		\includegraphics[width=0.8\textwidth]{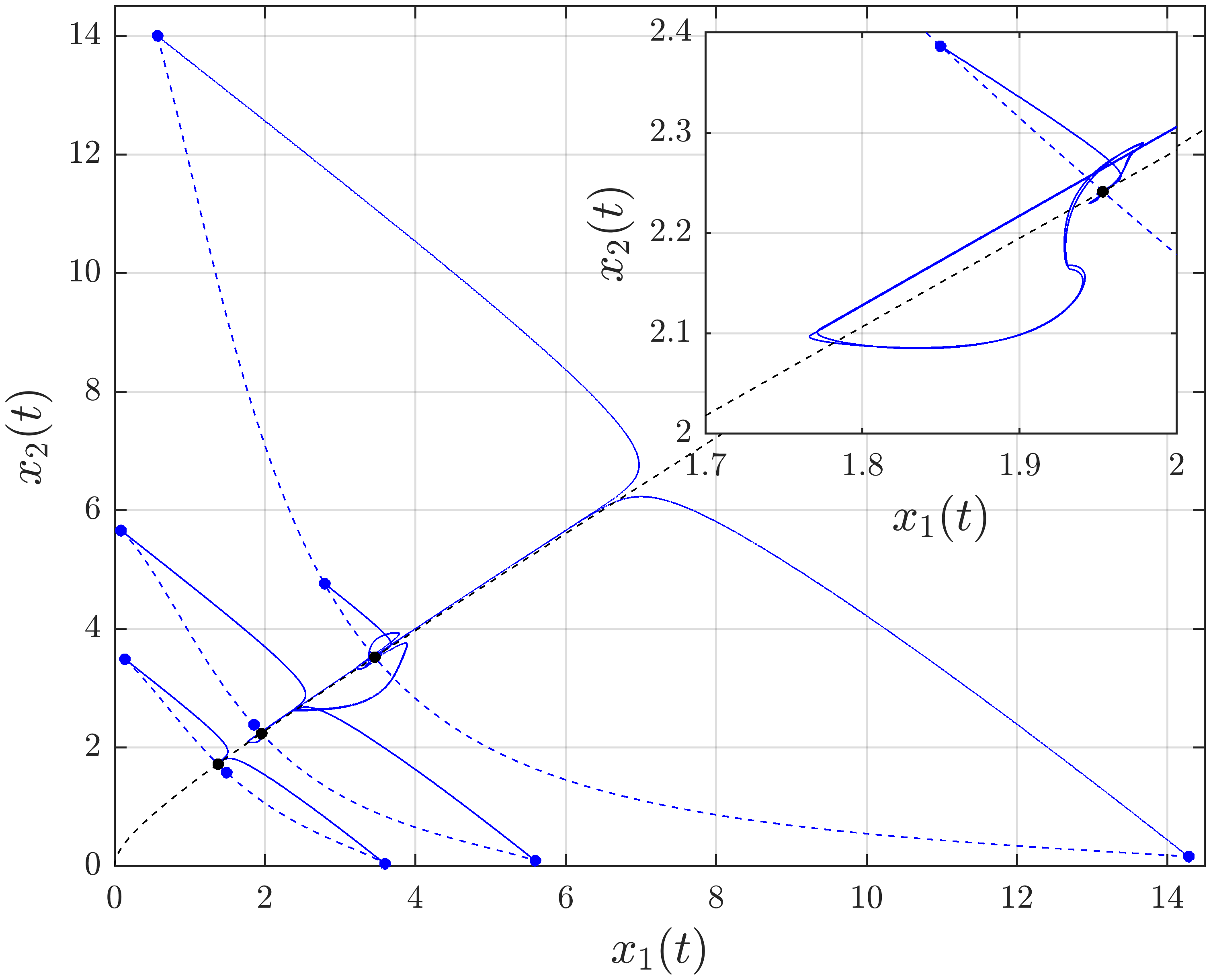}
	\end{center}
	\caption{Phase plot of Example 2}\label{fig:ex2:phase}
\end{figure}

\subsection{Example 3}
Our final example focuses on the Lyapunov-Krasovskii functional. Of course it cannot be visualized in general as it maps an infinite dimensional function space to nonnegative numbers. However, if we restrict the functional to constant history functions as in the previous examples, then we can compare it to the nondelayed Lyapunov function. In order to do so, we consider the following delayed reversible reactions
\begin{equation}
	2X_1\xrightleftharpoons[\kappa_2=1]{\kappa_1=1,\tau_1=1}2X_3\qquad2X_1+X_2\xrightleftharpoons[\kappa_4=2,\tau_4=0.5]{\kappa_3=1}3X_3,
\end{equation}
with transformations $\gamma_1(s)=s$, $\gamma_2(s)=\frac{s^2}{1+s}$ and $\gamma_3(s)=\frac{s}{1+s}$. Omitting the vector notation, the corresponding delayed differential equation takes the form
\begin{equation}
	\begin{aligned}
		\dot x_1(t)&=-2\kappa_1x_1^2(t)+2\kappa_2\qty\bigg(\frac{x_3(t)}{1+x_3(t)})^2-2\kappa_3x_1^2(t)\frac{x_2^2(t)}{1+x_2(t)}+2\kappa_4\qty\bigg(\frac{x_3(t-\tau_4)}{1+x_3(t-\tau_4)})^3\\
		\dot x_2(t)&=\kappa_4\qty\bigg(\frac{x_3(t-\tau_4)}{1+x_3(t-\tau_4)})^3-\kappa_3x_1^2(t)\frac{x_2^2(t)}{1+x_2(t)}\\
		\dot x_3(t)&=2\kappa_1x_1^2(t-\tau_1)-2\kappa_2\qty\bigg(\frac{x_3(t)}{1+x_3(t)})^2+3\kappa_3x_1^2(t)\frac{x_2^2(t)}{1+x_2(t)}-3\kappa_4\qty\bigg(\frac{x_3(t)}{1+x_3(t)})^3.
	\end{aligned}
\end{equation}
It is easy to see that the nondelayed system is conservative as $x_1+x_2+x_3$ is a first integral; that is, the nondelayed positive stoichiometric compatibility classes can be characterized as
\begin{equation}
	\mathcal{S}_p=\qty\big{x\in\mathbb{R}_+^3\big|x_1+x_2+x_3=p_1+p_2+p_3},
\end{equation}
where $p\in\mathbb{R}_+^3$ is arbitrary. Then for any fixed $p\in\mathbb{R}_+^3$ we can visualize the Lyapunov function \eqref{eq:gen:Lyapunov} as a two-dimensional function defined on the region
\begin{equation}
	\mathcal{D}_p=\qty\big{x\in\mathbb{R}_+^2\big|x_1+x_2\le p_1+p_2+p_3}.
\end{equation}
The delayed positive stoichiometric compatibility class of the delayed system is more complicated and, in particular, it is not a plane; that is, the delayed system is not conservative in this sense. However, it can be shown similarly to the previous examples that the system is persistent, and thus every delayed positive stoichiometric compatibility class contains precisely one positive equilibrium. Assuming a constant history function constructed from an element of $\mathcal{D}_p$, we can compute the value of the functional at the initial point of the corresponding trajectory. Figure \ref{fig:ex3} shows the contour plots of the Lyapunov function and the Lyapunov-Krasovskii functional on $\mathcal{D}_p$ with $p_1+p_2+p_3=1$.

\begin{figure}[H]
	\begin{subfigure}[b]{0.49\textwidth}
		\centering
		\includegraphics[width=\textwidth]{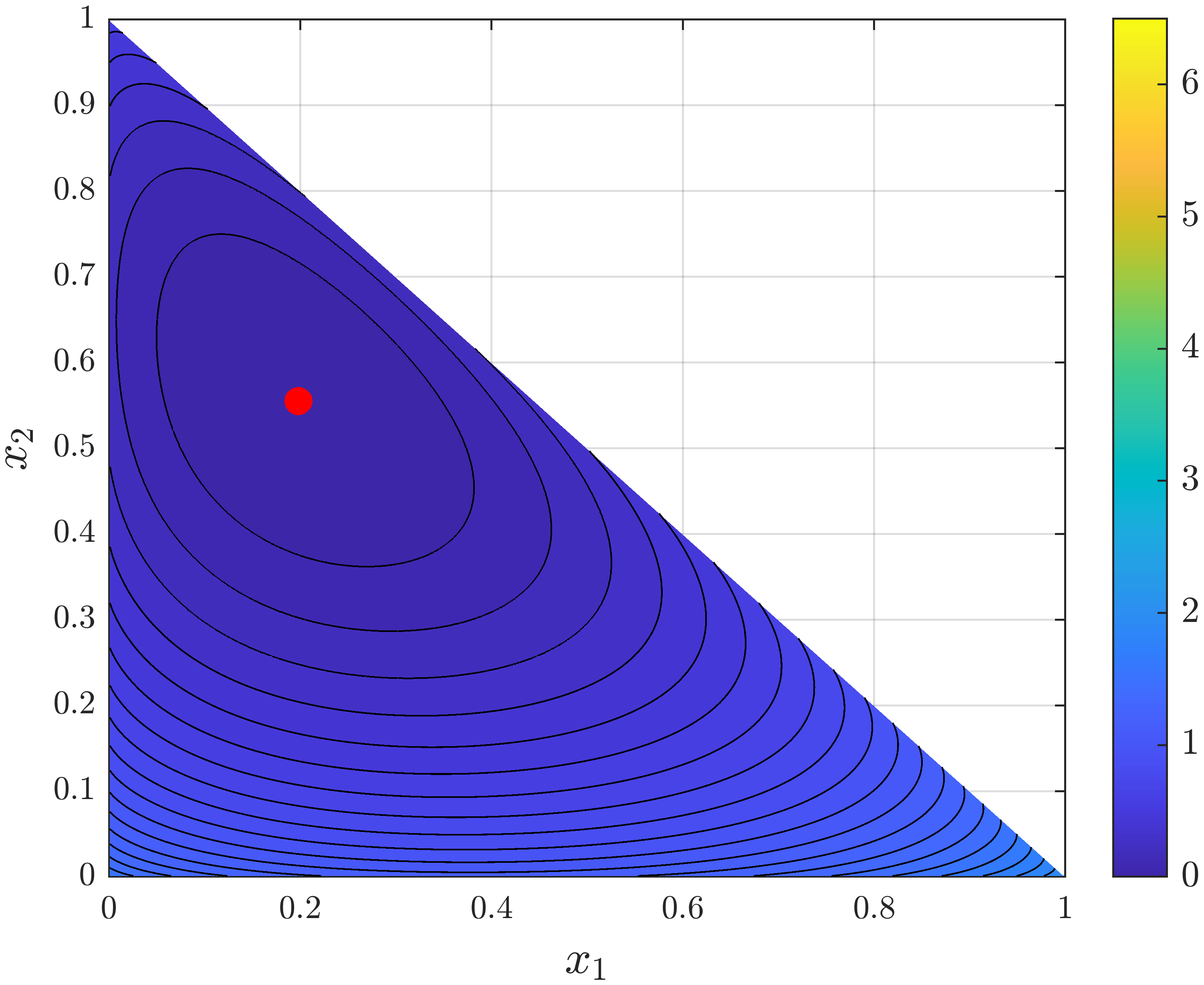}
		\caption{Lyapunov function}
	\end{subfigure}
	\hfill
	\begin{subfigure}[b]{0.49\textwidth}
		\centering
		\includegraphics[width=\textwidth]{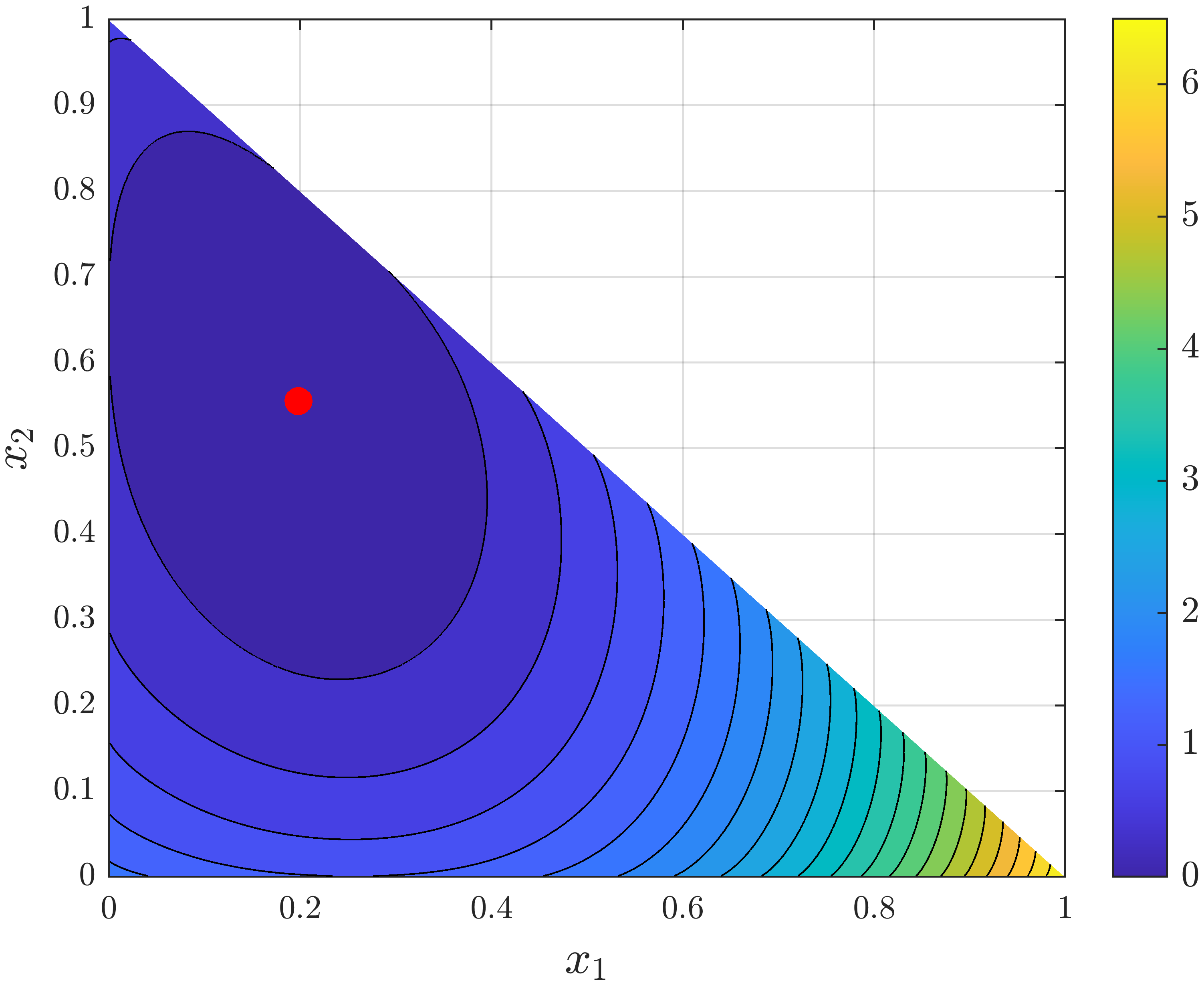}
		\caption{Lyapunov-Krasovskii functional}
	\end{subfigure}
	\caption{Level curves of the Lyapunov function of the nondelayed system and the Lyapunov-Krasovskii functional of the delayed system for constant history functions}\label{fig:ex3}
\end{figure}

\section{Conclusions}\label{sec:concl}
The stability of kinetic systems with time delays and general kinetics was studied in this paper. In preparation for the subsequent analysis, certain stability results of \cite{Sontag2001} were slightly generalized using the notion of quasi-thermodynamicity introduced in \cite{Horn1972}. Then it was shown for delayed complex balanced reaction networks that each positive stoichiometric compatibility class contains precisely one positive equilibrium that is locally asymptotically stable within their positive stoichiometric compatibility classes for arbitrary finite time delays. A key result of the paper allowing the stability proof is the construction of an appropriate Lyapunov-Krasovskii functional. Thus, the results proposed in \cite{Liptak2018} have been generalized for a wide class of delayed non-mass action reaction networks. It was also shown that the global stability of equilibria can be proved as well if the conditions in \cite{Komatsu2019,Komatsu2020} are fulfilled. Three illustrative examples were given to visualize the theoretical results. Further work will be focused on the kinetic realization and control of general nonlinear delayed models given in DDE form.

\section{Acknowledgements}
The authors acknowledge the support of the Hungarian National Research, Development and Innovation Office (NKFIH) through the grant 145934 and of the ÚNKP-23-3-II-PPKE-81 National Excellence Program of the Ministry for Culture and Innovation from the source of the National Research, Development and Innovation Fund.

%\bibliographystyle{abbrv}
%\bibliography{References}

\end{document}